\newtheorem{theorem}{Theorem}
\newtheorem{lemma}{Lemma}
\newtheorem{corollary}{Corollary}
\newtheorem{definition}{Definition}
\newtheorem{proposition}{Proposition}
\newtheorem{remark}{Remark}
\newcommand{\R}{\mathbb R}
\newcounter{rea}
\newcounter{rek}
\begin{document}


%
\begin{center}
{\large {\bf Approximations in Sobolev Spaces by
Prolate Spheroidal Wave Functions.}}\\
\vskip 1cm Aline Bonami$^a$ and Abderrazek Karoui$^b$ {\footnote{Corresponding author,\\
This work was supported in part by the  French-Tunisian  CMCU 10G 1503 project and the
Tunisian DGRST  research grants 05UR 15-02 and UR 13ES47. Part of this work was done while the first author was visiting the Faculty of Sci. of Bizerte, Univeristy of Carthage.
The two authors have also benefited from the program " Research in pairs " of the CIRM, Luminy, France.}}
\end{center}
\vskip 0.5cm {\small
$^a$ F\'ed\'eration Denis-Poisson, MAPMO-UMR 7349,  Department of Mathematics, University of Orl\'eans, 45067 Orl\'eans cedex 2, France.\\
\noindent $^b$ University of Carthage,
Department of Mathematics, Faculty of Sciences of Bizerte, Tunisia.}\\
Emails: aline.bonami@univ-orleans.fr ( A. Bonami), abderrazek.karoui@fsb.rnu.tn (A. Karoui)\\

\noindent{\bf Abstract}--- Recently, there is a growing interest in
the spectral approximation by the   Prolate Spheroidal Wave
Functions (PSWFs) $\psi_{n, c},\, c>0.$ This is due to the promising new contributions of these
functions in various classical as well as emerging applications from Signal Processing,
Geophysics, Numerical
Analysis, etc. The PSWFs form a basis with remarkable properties not only
for the space of band-limited functions with bandwidth $c,$ but also for the Sobolev
space $H^s([-1,1])$. The quality of the spectral approximation and the choice of the parameter $c$
when approximating a function in  $H^s([-1,1])$ by its truncated PSWFs series expansion, are
the main issues. By considering a function  $f\in H^s([-1,1])$ as
the restriction to $[-1,1]$ of an almost time-limited  and
band-limited function, we try to give satisfactory answers to these two issues.
Also, we illustrate the different results of this work by some
numerical examples.\\

\noindent {2010 Mathematics Subject Classification.} Primary
42C10, 65L70. Secondary 41A60, 65L15.\\
\noindent {\it  Key words and phrases.} Prolate spheroidal wave
functions, eigenvalues and eigenfunctions estimates, spectral approximation, Sobolev spaces.\\

\section{Introduction}

Let $f$ be a function that belongs to some Sobolev space $H^s(I),
s>0, \, I=[-1,1]$. The main issue of this work concerns the speed of convergence in $L^2(I)$ of its expansion in some  PSWF basis.
\smallskip

Let us recall that, for a given value $c>0$, called the bandwidth, PSWFs $(\psi_{n,c})_{n\geq 0}$ constitute an orthonormal basis of $L^2([-1, +1])$ of eigenfunctions of the two compact integral operators
 $\mathcal F_c$ and $\mathcal Q_c= \frac c{2\pi}\mathcal F_c^*\mathcal F_c$, defined  on $L^2(I)$ by
\begin{equation}\label{eq1.1}
 \mathcal F_c(f)(x)= \int_{-1}^1
e^{i\, c\, x\, y} f(y)\, dy,\quad \mathcal Q_c(f)(x)=\int_{-1}^1\frac{\sin c(x-y)}{\pi (x-y)}\, f(y)\,
dy. \end{equation}
PSWFs are also eigenfunctions of the
 Sturm-Liouville operator $\mathcal L_c$, defined by
\begin{equation}\label{eq1.0}
\mathcal L_c(\psi)=-\frac{d}{d\, x}\left[(1-x^2)\frac{d\psi}{d\,x}\right]+c^2 x^2\psi,
\end{equation}
We call $\chi_n(c)$ the eigenvalues of  $\mathcal L_c$, and $\lambda_n(c)$ the eigenvalues of $\mathcal Q_c$. The first ones are  arranged in the increasing order, the second ones in the  decreasing order $1>
\lambda_0(c)> \lambda_1(c)>\cdots>\lambda_n(c)>\cdots.$ We finally call $\mu_n(c)$ the eigenvalues of $\mathcal F_c$. They are given by
$$\mu_{n}(c)=i^n\sqrt{\frac {2\pi}c \lambda_n(c)}.$$
By Plancherel identity, PSWFs are normalized so that
\begin{equation}\label{eqq1.4}
\int_{-1}^1 |\psi_{n,c}(x)|^2\, dx = 1,\quad \int_{\mathbb R}
|\psi_{n,c}(x)|^2\, dx =\frac{1}{\lambda_n(c)},\quad n\geq 0.
\end{equation}
We  adopt the  sign normalization of the PSWFs, given by
\begin{equation}\label{eeqq1.4}
\psi_{n,c}(0) > 0\mbox{\ \  for even\ \ } n,\quad \, \psi'_{n,c}(0) > 0,\mbox{\ \ for odd \ \ }  n.
\end{equation}
A breakthrough in the theory and the computation of the PSWFs goes back to the 1960's and is due to D. Slepian
and his co-authors H. Landau and H. Pollack. For the classical and more recent developments in the area of the PSWFs, the reader
is referred to the recent books on the subjects \cite{Hogan, Osipov3}.
This paper is a companion paper of \cite{Bonami-Karoui2} and we refer to it for further notations and references.
\smallskip

   This question of the quality of approximation has attracted a growing interest while, at the same time, were built
PSWFs based numerical schemes for solving various problems from
numerical analysis, see \cite{Boyd1, Boyd2, Chen, Lin,  Moore, Rokhlin2, Wang}.
In particular, in \cite{Boyd1}, the author has shown that a PSWF
approximation based method outperforms in terms of spatial
resolution and stability of time-step, the classical approximation
methods based on Legendre or Tchebyshev polynomials. The authors
of \cite{Chen} were among the first to compare the quality of
approximation by the PSWFs for different values of $c$. In particular, they have given an estimate of the decay of
the PSWFs expansion coefficients of a function $f\in H^s(I)$, see also
 \cite{Boyd1}. Recently, in \cite{Wang}, the author studied the speed of convergence of the expansion of such a function in a basis of PSWFs.  We should mention that the methods
used in the previous three references are heavily based on the use
of the properties of the PSWFs as eigenfunctions of the
differential operator $\mathcal L_c,$ given by (\ref{eq1.0}). They pose the problem of
 the best choice of the value
of the band-width $c>0,$ for
approximating well a given $f\in H^s(I)$, but their answer is mainly experimental.  It has
been numerically checked in \cite{Boyd1, Wang} that the smaller
the value of $s,$ the larger the  value of $c$ should be.

\medskip

Our study tries to give a satisfactory answer to this important problem
of the choice of the parameter $c.$
More precisely, we show that
if  $f\in
H^s(I)$, for some positive real number $s>0,$ then for any
integer $N\geq 1,$ we have
\begin{equation}
\| f-S_N f\|_{L^2(I)}\leq K(1+c^2)^{-s/2} \|
f\|_{H^s(I)}+ K\sqrt{ \lambda_N(c)} \|f\|_{L^2(I)}.
\end{equation}
Here, ${\displaystyle S_N f=\sum_{k=0}^N <f,\psi_{n,c}> \psi_{n,c}}$ and   $K$ is a constant depending only on $s.$ With this expression, one sees clearly how to distribute a fixed error between that part which is due to the smoothness of the function and that part which is due to the speed of convergence for the PSWFs.
We also study an  $L^2(I)-$convergence rate of the projection $S_N f$ to $f.$ This is done by using the
 decay of the eigenvalues $(\lambda_n(c))_n$ as well as estimates of Legendre expansion coefficients of PSWFs and the decay of Fourier coefficients of PSWFs.
We prove an exponential decay rate, given by
\begin{equation}
|\langle e^{ik\pi x}, \psi_{n,c}(x)\rangle|\leq M' e^{-an},\quad |k|\leq n/M,\quad n\geq \max\left( c M, 3\right).
\end{equation}
Here, $c\geq 1,$ $M\geq \sqrt{2}$ and $M', a >0$ are two positive constants. Under these hypotheses and notations, our rate of convergence
of $S_N f$ to $f\in H^s(I)$ is given by
\begin{equation}
\|f- S_N (f)\|_{L^2(I)}\leq M'' (1+ N^2)^{-s/2}  \| f\|_{H^s}+ M' e^{-aN}  \| f\|_{L^2}.
\end{equation}

\smallskip

This work is organized as follows. In Section 2,  we first  give  bounds for the moments of the PSWFs, which we use to improve estimates of the
decay of  the Legendre expansion  coefficients of the PSWFs.
  In Section 3, we first consider the
quality of approximation by the PSWFs in the set  of almost time
and band-limited functions. Then, we combine these results with
those of  Section 2 and give a first  $L^2(I)-$error bound of
approximating a function $f\in H^s(I)$ by the  $N-$th partial
sum of its PSWFs series expansion.
Then, we study a more elaborated error analysis of the spectral approximation
by the PSWFs in the periodic Sobolev space.
This is afterwards  extended to the usual Sobolev space $H^s(I).$
 These new estimates provide us with a way for
the choice of the appropriate bandwidth $c>0$ to be used by a
PSWFs based method for the approximation in a given Sobolev space
$H^s(I).$ In Section 4,  we provide the reader
with  some numerical examples that illustrate the different
results	 of this work.

 We will frequently  skip the parameter  $c$ in $\chi_n(c)$ and $\psi_{n,c}$, when there is no doubt on the value of the bandwidth.
 We then note  $q= c^2/\chi_n$ and skip both parameters $n$ and $c$ when their values are obvious from the context.

\section{Decay estimates of the Legendre expansion coefficients}

In this paragraph, we give bounds for the Legendre expansion coefficients of PSWFs, which will be used later and are of general interest.
Legendre expansion coefficients of PSWFs have been the object of many studies, in particular in relation with numerical methods for their evaluation.
In particular, the classical method known as Flammer's method, \cite{Flammer} that uses the differential operator $\mathcal L_c$, is extensively used
to compute the PSWFs and their eigenvalues.

The  Legendre expansion of the PSWFs is given by
\begin{equation}\label{eqqq3.2}
 \psi_{n}(x)={\sum_{k\geq 0}} \beta_k^n \overline{P_k}(x).
 \end{equation}
Recall that  $\psi_n$ has the same parity as $n.$ Hence, the previous Legendre expansion coefficients of the
$\psi_n$ satisfy $\beta_k^n=0$ if $n$ and $k$ have different parities.

 It is well known that the  different
expansion coefficients $(\beta_k^n)_k$ as well as the
corresponding eigenvalues $\chi_n$ are obtained by solving the
following eigensystem
\begin{eqnarray}
&&\frac{(k+1)(k+2)}{(2k+3)\sqrt{(2k+5)(2k+1)}} c^2 \beta_{k+2}^n
 + \big( k(k+1)+ \frac{2k(k+1)-1}{(2k+3)(2k-1)} c^2\big)
\beta_k^n \label{eigensystem}\\
&&\hspace*{3cm} + \frac{k(k-1)}{(2k-1)\sqrt{(2k+1)(2k-3)}} c^2
\beta_{k-2}^n= \chi_n(c) \beta_k^n, \quad k\geq 0.\nonumber
\end{eqnarray}
The eigenvalues $\chi_n$ satisfy the following well known bounds
\begin{equation}
\label{bounds0-chi}
n (n+1)     \leq \chi_n \leq n(n+1)+c^2.
\end{equation}
In the case where $q=c^2/\chi_n\leq 1,$ we have the following less classical bounds
\begin{equation}
\label{bounds2-chi}
n (n+1)+ (3-2\sqrt 2)c^2     \leq  \chi_n\leq \left(\frac{\pi}{2} (n+1)\right)^2.
\end{equation}
The above lower bound has been given in \cite{Bonami-Karoui1} and the upper bound is given in \cite{Osipov}. The previous inequalities
will be needed in different parts of this work.

The decay of the coefficients $\beta_{k}^n$ has been the object of many studies. We give here upper bounds of the  $|\beta_k^n|$ with $k$ varying  from $0$ to  a certain order depending on $n.$ Such bounds have been proposed in \cite{Chen}. There was a gap in their proof, which has been filled up in the book \cite{Hogan} with some loss in the results. We improve these estimates that have been proposed by \cite{Chen}, both for the range of $k$ and the constant. In view of these results we also prove auxiliary properties, which may be of independent interest. We first provide bounds for the successive derivatives of $\psi_n$  at $0.$
For $n=0$,  we have proved in \cite{Bonami-Karoui1} that, for $q<2$,
\begin{equation}
\label{bound1psi}
 |\psi_n(0)|^2+ \chi_n^{-1}|\psi'_n(0)|^2\leq 1.
\end{equation}
Recall that either $\psi_n(0)$ or $\psi'_n(0)$ is zero, depending on the parity. The same is valid for higher derivatives at $0$.

\begin{proposition} Assume that $ c^2 <\chi_n.$  Then for any  integer $ k\geq 0$ satisfying $k(k+1)\leq \chi_n,$ we have
\begin{equation}\label{ineqderivatives}
\left|\psi^{(k)}_{n}(0)\right|\leq (\sqrt{\chi_n})^k ( |\psi_n(0)|^2+ \chi_n^{-1}|\psi'_n(0)|^2)^{1/2}.
\end{equation}
\end{proposition}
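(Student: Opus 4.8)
The plan is to start from the fact that $\psi_n=\psi_{n,c}$ is an eigenfunction of the Sturm--Liouville operator (\ref{eq1.0}), i.e. $\mathcal L_c\psi_n=\chi_n\psi_n$, which after expanding $\mathcal L_c$ becomes the prolate differential equation
\[
(1-x^2)\psi_n''(x)-2x\,\psi_n'(x)+\bigl(\chi_n-c^2x^2\bigr)\psi_n(x)=0 .
\]
Differentiating this identity $k$ times with the Leibniz rule and evaluating at $x=0$ kills all but finitely many terms, since $1-x^2$, $-2x$ and $\chi_n-c^2x^2$ are polynomials of degree at most two; this should yield the three--term recurrence
\[
\psi_n^{(k+2)}(0)=\bigl(k(k+1)-\chi_n\bigr)\psi_n^{(k)}(0)+c^2k(k-1)\,\psi_n^{(k-2)}(0),\qquad k\ge 0,
\]
where the last term is simply absent when $k=0$ or $k=1$ (its coefficient vanishes there). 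I would double-check the algebra that turns the contributions $-k(k-1)$, from $(1-x^2)\psi_n''$, and $-2k$, from $-2x\,\psi_n'$, into exactly the coefficient $k(k+1)-\chi_n$. Note also that the parity of $\psi_n$ already gives $\psi_n^{(k)}(0)=0$ for $k\not\equiv n\pmod 2$, so at most one of $\psi_n(0),\psi_n'(0)$ survives, in agreement with the right-hand side of (\ref{ineqderivatives}); in particular, since $\psi_n\not\equiv 0$, the quantity $M:=\bigl(|\psi_n(0)|^2+\chi_n^{-1}|\psi_n'(0)|^2\bigr)^{1/2}$ is strictly positive.

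The estimate (\ref{ineqderivatives}), namely $|\psi_n^{(k)}(0)|\le(\sqrt{\chi_n})^k M$, is then proved by induction on $k$ over the range $\{k\ge 0:\ k(k+1)\le\chi_n\}$, the recurrence stepping by $2$. The base cases $k=0$ and $k=1$ are immediate from the definition of $M$: indeed $|\psi_n(0)|\le M$ and $|\psi_n'(0)|=\sqrt{\chi_n}\cdot\chi_n^{-1/2}|\psi_n'(0)|\le\sqrt{\chi_n}\,M$. For the inductive step, take an index $m=k+2$ with $m(m+1)\le\chi_n$; then $k(k+1)<m(m+1)\le\chi_n$ and $(k-2)(k-1)\le\chi_n$, so the inductive hypothesis applies to $\psi_n^{(k)}(0)$ and $\psi_n^{(k-2)}(0)$, and moreover $|k(k+1)-\chi_n|=\chi_n-k(k+1)$. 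Inserting these bounds into the recurrence and dividing by the positive quantity $(\sqrt{\chi_n})^{k-2}M$, the desired inequality $|\psi_n^{(k+2)}(0)|\le(\sqrt{\chi_n})^{k+2}M$ reduces to
\[
\chi_n\bigl(\chi_n-k(k+1)\bigr)+c^2k(k-1)\le\chi_n^2,
\]
that is, to $c^2k(k-1)\le k(k+1)\,\chi_n$, which holds because $c^2<\chi_n$ and $k(k-1)\le k(k+1)$. This is the one and only place where the hypothesis $c^2<\chi_n$ is used.

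I do not expect a genuine obstacle here; the delicate points are purely organisational: getting the Leibniz bookkeeping right so that the coefficient of $\psi_n^{(k)}(0)$ is exactly $k(k+1)-\chi_n$; treating the initial indices $k=0,1$, where the $\psi_n^{(k-2)}(0)$ term has vanishing coefficient and so drops out; and checking that the index set $\{k\ge 0:\ k(k+1)\le\chi_n\}$ is stable in the direction the induction runs — it is, because the recurrence expresses $\psi_n^{(m)}(0)$ only through the strictly smaller indices $m-2$ and $m-4$, which therefore also satisfy the constraint. In particular, the proposition reduces the control of all the derivatives $\psi_n^{(k)}(0)$ with $k(k+1)\le\chi_n$ to the control of the single number $M$, i.e. of $\psi_n(0)$ and $\psi_n'(0)$.
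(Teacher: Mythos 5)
Your proof is correct, and its skeleton is the same as the paper's: both derive the three-term recurrence $\psi_n^{(k+2)}(0)=(k(k+1)-\chi_n)\psi_n^{(k)}(0)+c^2k(k-1)\psi_n^{(k-2)}(0)$ from the prolate ODE and then run an induction on the normalized quantities $|\psi_n^{(k)}(0)|/(\sqrt{\chi_n})^{k}$. The one genuine difference is how the recurrence is converted into a bound on moduli. The paper first proves, by a separate induction, that the nonzero derivatives $\psi_n^{(k)}(0)$ alternate in sign (using $k(k+1)\le\chi_n$), which turns the recurrence into an exact identity for the absolute values, $|\psi_n^{(k+2)}(0)|=(\chi_n-k(k+1))|\psi_n^{(k)}(0)|+c^2k(k-1)|\psi_n^{(k-2)}(0)|$; you instead apply the triangle inequality directly, which is cruder but suffices, since your final reduction $c^2k(k-1)\le k(k+1)\chi_n$ follows from $c^2<\chi_n$ exactly as the paper's does. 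Your shortcut is legitimate and slightly more economical for the proposition itself, but note that the sign-alternation property is not a disposable intermediate step in the paper: it is reused in the subsequent corollary to show that the moments $\int_{-1}^1 y^j\psi_n(y)\,dy$ are all non-negative (via $\int_{-1}^1 y^j\psi_n(y)\,dy=(-i)^jc^{-j}\mu_n(c)\psi_n^{(j)}(0)$), so if you adopt your version you would still need to establish the alternation separately before that corollary.
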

\begin{proof} Since $\psi_{n, c}$ has same parity as $n$, then it is sufficient to consider even derivatives or odd derivatives depending on the parity of $n$. We assume  that  $n$ is even and consider $k=2l$. The proof is identical for odd values.
 By an
iterative use  of the identity
$$(1-x^2)\psi_n''(x)=2x \psi_n'(x)+(c^2x^2-\chi_n)\psi_n(x),$$
 one can easily check that the
${\displaystyle \psi^{(k)}_{n,c}(0)=\psi^{(k)}(0)}$ are given by
the  recurrence relation
\begin{equation}\label{recurrence1}
\psi^{(k+2)}(0)=(k(k+1)-\chi_n) \psi^{(k)}(0)+k(k-1) c^2
\psi^{(k-2)}(0),\quad k\geq 0.
\end{equation}
  Let us show by induction that for a fixed $n,$ ${\displaystyle
\psi^{(2l)}_{n,c}(0)}$ has alternating signs, that is
$\psi^{(k)}_{n,c}(0)\psi^{(k-2)}_{n,c}(0)<0.$ Indeed,
with $\psi(0) >0,$ $\psi^{(2)}(0)=-\chi_n \psi(0)$ so that
the induction hypothesis is fulfilled for $k=2$.
 Multiplying both sides of (\ref{recurrence1}) by $\psi^{(k)}(0),$ using the assumption that
$k(k+1)\leq \chi_n$ as well as the induction hypothesis, one
concludes that the induction assumption holds for the order $k.$
Consequently, we have,
\begin{equation}\label{recurrence2}
\left|\psi^{(k+2)}(0)\right|=(\chi_n-k(k+1))
\left|\psi^{(k)}(0)\right|+k(k-1) c^2
\left|\psi^{(k-2)}(0)\right|,\quad k\geq 0.
\end{equation}
This may be rewritten as
\begin{equation}\label{recurrence3}
m_{k+2}=\left(1-\frac{k(k+1)}{\chi_n}\right)m_k+k(k-1)\frac{q}{\chi_n}
m_{k-2}.
\end{equation}
The fact that all the $m_{2l}$ are bounded by $m_0=\psi(0)= m_2$ follows at once by induction.
For $n$ odd the proof follows the same lines. \end{proof}

As a consequence of the previous proposition, we have the following corollary concerning the sign and the bounds
of the different moments of the $\psi_n.$

\begin{corollary} Let $c>0,$ be a positive real number. We assume that $q= c^2/\chi_n<1$. Then, for $j(j+1)\leq \chi_n$, all moments $\int_{-1}^1 y^j \psi_{n}(y)\, dy$ are non negative and
\begin{equation}
\label{momentspsi}
0\leq\int_{-1}^1 y^{j} \psi_{n}(y)\, dy \leq \left(\frac{1}{q}\right)^{j} |\mu_n(c)|.
\end{equation}
\end{corollary}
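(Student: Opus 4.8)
The plan is to exploit the defining relation $\mathcal F_c(\psi_{n,c})=\mu_n(c)\,\psi_{n,c}$ in order to reduce the moments to the central derivatives $\psi_{n,c}^{(k)}(0)$, which are already controlled by the previous Proposition. Writing this relation as $\int_{-1}^1 e^{icxy}\psi_n(y)\,dy=\mu_n(c)\,\psi_n(x)$, differentiating $k$ times in $x$ and setting $x=0$ gives
\[
(ic)^k\int_{-1}^1 y^k\psi_n(y)\,dy=\mu_n(c)\,\psi_n^{(k)}(0),\qquad\text{so}\qquad \int_{-1}^1 y^k\psi_n(y)\,dy=\frac{\mu_n(c)}{(ic)^k}\,\psi_n^{(k)}(0).
\]
The upper bound follows from this identity alone: taking moduli and combining the Proposition with the normalization bound $|\psi_n(0)|^2+\chi_n^{-1}|\psi_n'(0)|^2\le 1$ of (\ref{bound1psi}), one gets $|\psi_n^{(k)}(0)|\le\chi_n^{k/2}$ as soon as $k(k+1)\le\chi_n$, whence
\[
\Big|\int_{-1}^1 y^k\psi_n(y)\,dy\Big|\le\frac{|\mu_n(c)|}{c^k}\,\chi_n^{k/2}=|\mu_n(c)|\Big(\frac{\chi_n}{c^2}\Big)^{k/2}=|\mu_n(c)|\,q^{-k/2}\le|\mu_n(c)|\,q^{-k},
\]
the last inequality because $q<1$, so that the stated bound $(1/q)^j|\mu_n(c)|$ (and even a slightly sharper one) holds.

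For the non-negativity I would turn the three-term recurrence (\ref{recurrence1}) for the $\psi_n^{(k)}(0)$ into a recurrence for the moments $M_k:=\int_{-1}^1 y^k\psi_n(y)\,dy$. Substituting $\psi_n^{(k)}(0)=(ic)^kM_k/\mu_n(c)$ into (\ref{recurrence1}), all the powers of $i$ disappear and one is left with the real recurrence
\[
c^2\,M_{k+2}=\bigl(\chi_n-k(k+1)\bigr)M_k+k(k-1)\,M_{k-2}.
\]
The same identity can be obtained directly, with no reference to $\mu_n(c)$, by multiplying the Sturm--Liouville equation $(1-y^2)\psi_n''-2y\psi_n'+(\chi_n-c^2y^2)\psi_n=0$ by $y^k$ and integrating by parts over $[-1,1]$, the boundary terms cancelling out after using the parity of $\psi_n$; this is the cleanest route. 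As long as $k(k+1)\le\chi_n$ both coefficients on the right are non-negative, so $M_{k+2}\ge 0$ whenever $M_k\ge 0$ and $M_{k-2}\ge 0$. It then remains only to settle the base cases $M_0=\mu_n(c)\,\psi_n(0)\ge 0$ for $n$ even and $M_1=(ic)^{-1}\mu_n(c)\,\psi_n'(0)\ge 0$ for $n$ odd, which follow from the sign normalization (\ref{eeqq1.4}) and the value of $\mu_n(c)$. An induction through the values of $k$ of the same parity as $n$, up to the largest $j$ with $j(j+1)\le\chi_n$, then yields $M_j\ge 0$ for all such $j$, and together with the previous paragraph this is exactly the statement.

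I expect the sign analysis, rather than the size estimate, to be the delicate point: one has to make sure that the alternating signs of the central derivatives established in the proof of the Proposition combine with the phase of $\mu_n(c)$ so that each moment is genuinely non-negative --- equivalently, that the two base cases of the moment recurrence come out with the correct sign. Once that is in place, the recurrence, the induction, and the modulus bound (immediate from the Proposition) are all routine.
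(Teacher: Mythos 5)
Your proof is correct and follows essentially the same route as the paper's: both start from the identity $\int_{-1}^1 y^j\psi_n(y)\,dy=(-i)^j c^{-j}\mu_n(c)\,\psi_n^{(j)}(0)$, obtained by differentiating the eigenvalue relation for $\mathcal F_c$ at $x=0$, and both deduce the size bound from the preceding Proposition together with \eqref{bound1psi}. The differences are minor but worth noting: where the paper obtains the common sign of the moments from the sign alternation of the $\psi_n^{(j)}(0)$ (established inside the proof of the Proposition) combined with the factor $(-i)^j$, you transport the recurrence \eqref{recurrence1} to the moments themselves; this is logically equivalent, but it isolates the base cases $M_0, M_1\geq 0$ cleanly and admits an independent derivation straight from the Sturm--Liouville equation (your boundary terms do cancel by parity, as claimed), with no reference to $\mu_n(c)$. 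You also correctly observe that the argument yields the sharper exponent $q^{-j/2}$ in place of the stated $q^{-j}$. The one delicate point you flag --- the compatibility of the phase in $\mu_n(c)=i^n\sqrt{2\pi\lambda_n(c)/c}$ with the sign normalization \eqref{eeqq1.4}, needed for the base cases --- is precisely the point the paper also treats only implicitly, via \eqref{beta0}, so your caution there is warranted rather than a gap.
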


\begin{proof} It is sufficient to consider moments of odd order when $n$ is odd and   of even order when $n$ is even.
By taking the $j-$th derivative at zero on both sides of ${\displaystyle \int_{-1}^1 e^{icxy} \psi_{n}(y)\, dy =\mu_n(c) \psi_{n}(x),}$
one gets
\begin{equation}
\label{moments2psi}
\int_{-1}^1 y^j \psi_{n}(y)\, dy= (-i)^j c^{-j} \mu_n(c)\psi_{n}^{(j)}(0).
\end{equation}
Since $\psi_{n}^{(j)}(0)$ and  $\psi_{n}^{(j+2)}(0)$ have opposite signs, then the previous equation implies
that  moments  have the same sign for any positive integer $j$ with $j(j+1)\leq \chi_n.$ The  inequality
 (\ref{momentspsi}) follows from the previous proposition.
\end{proof}

We now study the positivity of  $\beta_k^n$ for small values of $k$. Remark that for $c=0$ all these coefficients vanish when $k<n$. The positivity of $\beta_0^n$ (when $n$ is even)  and $\beta_1^n$ (when $n$ is odd) follows from the fact that
\begin{equation}
\label{beta0}
 \beta_0^n=\frac 1{\sqrt 2} \int_{-1}^{+1}\psi_n(y)dy= \frac 1{\sqrt 2}|\mu_n(c)|\psi_n(0),\qquad  \beta_1^n= \sqrt{\frac 32} \int_{-1}^{+1}y\psi_n(y)dy= |\mu_n(c)|\frac{\psi_n'(0)}{c}.
\end{equation}

 \begin{lemma}
Let $c>0,$ be a fixed positive real number. Then, for all positive  integers $k, n$ such that $k(k-1)+1. 13\, c^2\leq \chi_n(c)$, we have $\beta_k^n\geq 0$.
\end{lemma}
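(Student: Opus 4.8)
The plan is to argue by induction on $k$ inside the parity class of $n$: when $k\not\equiv n\pmod 2$ one has $\beta_k^n=0$ and there is nothing to prove, so fix $n_0\in\{0,1\}$ equal to the parity of $n$ and consider only $k\equiv n_0\pmod2$. Denote by $a_j,b_j,d_j$ the coefficients of $\beta_{j+2}^n,\beta_j^n,\beta_{j-2}^n$ in the eigensystem (\ref{eigensystem}). Inspecting the explicit formulas (equivalently, using that $\mathcal L_c$ is self--adjoint and the $\overline{P_k}$ form an orthonormal basis, so the associated Jacobi matrix is symmetric) one gets $a_j=d_{j+2}$, which will make the bookkeeping below symmetric. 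The base case $\beta_{n_0}^n>0$ is exactly (\ref{beta0}) together with the sign normalization (\ref{eeqq1.4}).

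The heart of the argument is a summed form of the recursion. Writing (\ref{eigensystem}) as $d_{j+2}\beta_{j+2}^n=(\chi_n-b_j)\beta_j^n-d_j\beta_{j-2}^n$ and summing over $j=n_0,n_0+2,\dots,k-2$, the two three--term contributions combine and produce, for $k\ge n_0+4$,
\[
 d_k\,\beta_k^n=(\chi_n-b_{n_0}-d_{n_0+2})\,\beta_{n_0}^n
 +\sum_{i=n_0+2}^{k-4}\bigl(\chi_n-b_i-d_i-d_{i+2}\bigr)\,\beta_i^n
 +(\chi_n-b_{k-2}-d_{k-2})\,\beta_{k-2}^n ,
\]
with the evident degenerate versions $d_{n_0+2}\beta_{n_0+2}^n=(\chi_n-b_{n_0})\beta_{n_0}^n$ when $k=n_0+2$ and an empty sum when $k=n_0+4$. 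Since $d_k>0$ for $k\ge2$, and since the set of $k$ with $k(k-1)+1.13\,c^2\le\chi_n$ is downward closed in the parity class, the induction hypothesis supplies $\beta_i^n\ge0$ for every $i<k$ in this class; hence it is enough to show that, under the hypothesis on $k$, all the bracketed coefficients on the right--hand side are nonnegative.

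That last verification is the only delicate point, and it is where the constant $1.13$ is used. Write $b_i=i(i+1)+\theta_i c^2$ and $d_i=\delta_i c^2$ with $\theta_i=\frac{2i(i+1)-1}{(2i+3)(2i-1)}$ and $\delta_i=\frac{i(i-1)}{(2i-1)\sqrt{(2i+1)(2i-3)}}$. One first records the elementary monotonicity facts that $\theta_i$ and $\delta_i$ are decreasing for $i\ge2$, so $\theta_i+\delta_i+\delta_{i+2}\le\theta_2+\delta_2+\delta_4<1.13$ and $\theta_i+\delta_i\le\theta_2+\delta_2<1.13$ for $i\ge2$ (the few small indices are handled directly: $\delta_0=\delta_1=0$, $\theta_0=\tfrac13$, $\theta_1=\tfrac35$). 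Then, using $\chi_n\ge k(k-1)+1.13\,c^2$: the last coefficient is $\ge 2(k-1)+(1.13-\theta_{k-2}-\delta_{k-2})c^2>0$; a middle coefficient with $i\le k-4$ satisfies $i(i+1)\le(k-4)(k-3)=k(k-1)-(6k-12)$, hence is $\ge(6k-12)+(1.13-\theta_i-\delta_i-\delta_{i+2})c^2>0$; and the first coefficient is controlled the same way, using $k\ge n_0+2$ (so $k(k-1)\ge(n_0+2)(n_0+1)$) to absorb $b_{n_0}+d_{n_0+2}$. Thus every coefficient is $\ge0$, so $d_k\beta_k^n\ge0$ and $\beta_k^n\ge0$, closing the induction. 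The main obstacle is therefore not conceptual: it is the careful index bookkeeping in the telescoping identity together with the sharp but elementary numerical estimates on $\theta_i,\delta_i$ that are exactly what make the value $1.13$ work.
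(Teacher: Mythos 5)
Your proof is correct, and it closes the induction by a genuinely different mechanism from the paper's. The paper also inducts on $k$ within the parity class, starting from $\beta_{n_0}^n>0$, but it replaces both off-diagonal entries of the Jacobi matrix by the uniform upper bound $\frac{2c^2}{3\sqrt5}$ to obtain $\frac{2c^2}{3\sqrt{5}}(\beta_{j+2}^n+\beta_{j-2}^n)\geq(\chi_n-j(j+1)-\frac{11c^2}{21})\beta_j^n$, and then proves the stronger statement that the sequence $(\beta_{n_0+2j}^n)$ is \emph{non-decreasing} up to index $k$: assuming $\beta_{j-2}^n\leq\beta_j^n$ and $\beta_{j+2}^n<\beta_j^n$ contradicts that inequality precisely because $1.13>\frac{4}{3\sqrt5}+\frac{11}{21}$. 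You instead exploit the exact symmetry $a_j=d_{j+2}$ of the tridiagonal system to telescope the recursion into an identity expressing $d_k\beta_k^n$ as a combination of the earlier coefficients with explicit weights, and you verify that these weights are non-negative using the monotonicity of $\theta_i$ and $\delta_i$; your numerical requirement $\theta_2+\delta_2+\delta_4\approx1.078<1.13$ is in fact slightly less demanding than the paper's $\frac{4}{3\sqrt5}+\frac{11}{21}\approx1.120<1.13$, so the same threshold works. I checked the telescoped identity (including the degenerate cases $k=n_0+2$ and $k=n_0+4$, and the small indices where $\delta_0=\delta_1=0$) and the estimates on the bracketed coefficients; they hold. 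The trade-off is clear: the paper's argument is shorter and yields the monotonicity of the $\beta_k^n$ as a by-product, while yours avoids the crude uniform bound on the off-diagonal entries at the price of heavier index bookkeeping and delivers only non-negativity, which is all the lemma requires.
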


\begin{proof}
We recall that the $\beta_k^n$ are given by the  eigensystem (\ref{eigensystem}). Let us first consider $k=2$ (when $n$ is even) and $k=3$ (when $n$ is odd) and compute
$$ \beta_2^{n} =\frac{3\sqrt{5}}{2 c^2}\left(\chi_n -\frac{c^2}{3}\right)\beta_0^{n},\quad
 \beta_3^{n} =\frac{5\sqrt{21}}{6 c^2}\left(\chi_n-2 -\frac{3 c^2}{5}\right)\beta_1^{n}.$$
They are positive, assuming that $2$ satisfies the condition (resp. 3 satisfies the condition). For $k\geq 2$, taking upper bounds for the fractions as in  \cite{Chen}, Equation (\ref{eigensystem}) implies that
\begin{equation}\label{ineq}
\frac{2c^2}{3\sqrt{5}} ( \beta_{j+2}^n +\beta_{j-2}^n)\geq
 ( \chi_n(c)-j(j+1)-\frac{11c^2}{21}) \beta_j^n.
\end{equation}
The constant $1.13$ has been chosen so that $1.13>\frac{4}{3\sqrt{5}}+\frac{11}{21}$. Let us prove  by induction that the sequence $\beta_{2j}^n$ (resp. $\beta_{2j+1}^n)$ is non decreasing for $2j\leq k$ (resp. $2j+1\leq k$) when the assumption is satisfied by $k$. Without loss of generality we assume now that $n$ is even. We have $\beta_2^n\geq \beta_0^n$. Next, by the induction assumption on $j$, we know that $\beta_{j-2}^n\leq \beta_j^n$. We get a contradiction with inequality \eqref{ineq} if we assume  that $\beta_{j+2}^n< \beta_j^n$. So $\beta_{j+2}^n\geq \beta_j^n$.
 This implies the positivity.
\end{proof}

The following proposition provides us with  decay rate of the expansion coefficients $\beta_k^n$ that we had in view.

\begin{proposition}
Let $c>0,$ be a fixed positive real number. Then, for all positive integers  $n,k$ such that $k(k-1)+1. 13\,  c^2\leq \chi_n(c)$, we have
\begin{equation}
\label{Decay2beta}
|\beta_0^n|\leq \frac{1}{\sqrt{2}}|\mu_n(c)|\quad \mbox{ and }\quad |\beta_k^n|\leq \sqrt{\frac{5}{4\pi}}\left(\frac{2}{\sqrt{q}}\right)^{k} |\mu_n(c)|.
\end{equation}
\end{proposition}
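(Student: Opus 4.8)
The plan is to establish the second bound by induction on $k$, feeding the three-term recurrence \eqref{eigensystem} with the sign information from the previous lemma. If $k$ and $n$ have opposite parity then $\beta_k^n=0$ and there is nothing to prove, so assume $k\equiv n\pmod 2$. The hypothesis $k(k-1)+1.13\,c^2\le\chi_n$ passes to every $j\le k$ (since $j(j-1)\le k(k-1)$), so the lemma gives $\beta_j^n\ge 0$ for all $0\le j\le k$; in particular, taking $k\ge 1$ forces $1.13\,c^2\le\chi_n$, i.e. $q=c^2/\chi_n<1$, so \eqref{bound1psi} is available. The base cases then come straight from \eqref{beta0}: for $k=0$, $\beta_0^n=\tfrac1{\sqrt2}|\mu_n(c)|\,\psi_n(0)$ and $|\psi_n(0)|\le 1$ give the first inequality $|\beta_0^n|\le\tfrac1{\sqrt2}|\mu_n(c)|$; for $k=1$, $|\beta_1^n|=|\mu_n(c)|\,|\psi_n'(0)|/c\le|\mu_n(c)|/\sqrt q$ (using $|\psi_n'(0)|\le\sqrt{\chi_n}$ from \eqref{bound1psi}), which is $\le\sqrt{5/(4\pi)}\,(2/\sqrt q)\,|\mu_n(c)|$ because $2\sqrt{5/(4\pi)}=\sqrt{5/\pi}>1$.

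For the inductive step, solve \eqref{eigensystem} for $\beta_{k+2}^n$. Write $A_k=\frac{(k+1)(k+2)}{(2k+3)\sqrt{(2k+5)(2k+1)}}\,c^2>0$ for the coefficient of $\beta_{k+2}^n$ and $B_k\ge0$, $C_k\ge0$ for the diagonal and lower off-diagonal coefficients, so that $A_k\beta_{k+2}^n=(\chi_n-B_k)\beta_k^n-C_k\beta_{k-2}^n$. Since $\beta_{k-2}^n,\beta_k^n\ge0$ and $B_k,C_k\ge0$, this yields $\beta_{k+2}^n\le(\chi_n/A_k)\beta_k^n=(R_k/q)\beta_k^n$, where
\[
R_k:=\frac{c^2}{A_k}=\frac{(2k+3)\sqrt{(2k+5)(2k+1)}}{(k+1)(k+2)} .
\]
The crucial elementary fact is that $R_k<4$ for every $k\ge 0$: with $t=(2k+3)^2\ge 9$ one has $(2k+5)(2k+1)=t-4$ and $4(k+1)(k+2)=t-1$, so $R_k<4$ reduces to $\sqrt{t(t-4)}<t-1$, i.e., after squaring, $-2t<1$, which always holds.

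Now the induction closes. Granting the stated bound at index $k-2\ge 1$ and using the identity $(2/\sqrt q)^{k-2}/q=(2/\sqrt q)^{k}/4$,
\[
\beta_k^n\le\frac{R_{k-2}}{q}\sqrt{\frac5{4\pi}}\Big(\frac2{\sqrt q}\Big)^{k-2}|\mu_n(c)|=\frac{R_{k-2}}{4}\sqrt{\frac5{4\pi}}\Big(\frac2{\sqrt q}\Big)^{k}|\mu_n(c)|\le\sqrt{\frac5{4\pi}}\Big(\frac2{\sqrt q}\Big)^{k}|\mu_n(c)|,
\]
since $R_{k-2}<4$; the odd indices are treated identically, the chain starting from $k=1$. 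The one case not covered by this is $k=2$ (where $k-2=0$): here one feeds in $\beta_0^n\le\tfrac1{\sqrt2}|\mu_n(c)|$ instead, obtaining (as in the displayed formula for $\beta_2^n$ above, with $\chi_n-c^2/3\le\chi_n$) $\beta_2^n\le\frac{R_0}{4\sqrt2}\big(\tfrac2{\sqrt q}\big)^2|\mu_n(c)|$ with $R_0=\tfrac{3\sqrt5}{2}$, and the inequality $\tfrac{3\sqrt5}{8\sqrt2}\le\sqrt{5/(4\pi)}$ needed here is exactly $\pi\le 32/9$.

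I expect the main obstacle to be quantitative rather than structural: the constant $\sqrt{5/(4\pi)}$ is only slightly larger than what the base cases $k=1,2$ force, so one cannot afford a crude constant in place of $A_k$ — the uniform bound $R_k<4$ (together with the small value $R_0=3\sqrt5/2$ sitting at the bottom of the even chain) is precisely what lets each step absorb its loss and reproduce the hypothesis. Everything else — the parity reduction, the inheritance of the hypothesis to smaller indices (hence the positivity input from the lemma), and the bookkeeping identity $(2/\sqrt q)^{k-2}/q=(2/\sqrt q)^{k}/4$ — is routine.
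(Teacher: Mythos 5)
Your proof is correct, but it takes a genuinely different route from the paper's. The paper never returns to the three-term recurrence at this stage: it expands $x^j=\sum_{k\le j}a_{jk}\overline{P_k}(x)$ with non-negative Legendre moments $a_{jk}$, so that $\int_{-1}^1x^j\psi_n\,dx=\sum_{k\le j}a_{jk}\beta_k^n\ge a_{jj}\beta_j^n$ by the positivity lemma, and then combines the moment bound $\int_{-1}^1x^j\psi_n\,dx\le q^{-j/2}|\mu_n(c)|$ of Corollary 1 (which rests on Proposition 1 about the derivatives of $\psi_n$ at $0$) with the explicit estimate $1/a_{jj}\le 2^j\sqrt{5/(4\pi)}$; the constant there comes from the diagonal Legendre moments. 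You instead run an induction directly on the eigensystem \eqref{eigensystem}, using positivity to discard the two non-negative terms and controlling the surviving ratio by the uniform bound $R_k<4$, with the constant dictated by the base cases $k=1,2$. Both arguments rely on the same positivity lemma and on \eqref{bound1psi}--\eqref{beta0} at the bottom of the chain. What your version buys is self-containedness: it bypasses Proposition 1 and Corollary 1 entirely, and in particular never needs the side condition $j(j+1)\le\chi_n$ of Corollary 1, which is not literally implied by the hypothesis $k(k-1)+1.13\,c^2\le\chi_n$ when $c$ is small. The price is the delicate constant-chasing you correctly identify — the step $k=2$ must be treated separately because the $k=0$ bound $1/\sqrt2$ exceeds $\sqrt{5/(4\pi)}$, and the margin in $R_0=3\sqrt5/2<4$ is exactly what absorbs this. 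The paper's version is non-inductive and reuses moment estimates that serve it elsewhere.
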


\begin{proof} The first inequality follows from \eqref{bound1psi} and \eqref{beta0}.   To prove the second inequality, we first note that the moments $a_{jk}$ of the normalized Legendre polynomials $\overline P_k$ are non-negative (see \cite{Andrews}). They vanish except for $k\leq j$, with $k, j$ of the same parity. Moreover, for $j=k$, we have
\begin{equation}
a_{kk}=\int_{-1}^{1}x^{k}\overline{P_k}(x)dx=
\frac{\sqrt{\pi}\sqrt{k+1/2} \,k!}{2^{k}\Gamma(k+\frac{3}{2})}.
\end{equation}
Since ${\displaystyle x^j=\sum_{k=0}^j a_{jk} \overline{P_k}(x),}$  the moments of the $\psi_n$ are related to the
PSWFs Legendre expansion coefficients by
$$\int_{-1}^1 x^j \psi_n(x)\, dx = \sum_{k=0}^j a_{jk}  \beta_k^n.$$
Since by the previous lemma, we have $\beta_k^n \geq 0,$ for any $0\leq  k\leq j$ and since the $a_{jk}$ are non negative,  the previous equality
implies that
\begin{equation}\label{decay3beta}
\beta_j^n \leq \frac{1}{a_{jj}} \int_{-1}^1 x^j \psi_n(x)\, dx \leq  \frac{1}{a_{jj}} \left(\frac{1}{q}\right)^{j/2}|\mu_n(c)|.
\end{equation}
The last inequality follows from the previous corollary. On the other hand, we have
$$a_{jj}= \frac{\sqrt{\pi} \sqrt{j+1/2} j! }{2^j \Gamma(j+3/2)}= \frac{\sqrt{\pi}  j! }{2^j \sqrt{j+1/2}\Gamma(j+1/2)}.$$
Moreover, it is well known that ${\displaystyle j^{1-s}\leq \frac{\Gamma(j+1)}{\Gamma(j+s)}\leq (j+1)^{1-s}}.$ Hence, we have
\begin{equation}\label{decay4beta}
\frac{1}{a_{jj}}\leq \frac{2^j}{\sqrt{\pi}}\sqrt{1+\frac{1}{2j}}\leq 2^j\sqrt{\frac{5}{4\pi}},\quad\forall\, j\geq 1.
\end{equation}
By combining (\ref{decay3beta}) and (\ref{decay4beta}), one gets the second inequality of (\ref{Decay2beta}).
\end{proof}

\begin{remark}\label{condition2prop3}  The condition $k(k-1)+1.13 c^2\leq \chi_n(c)$ of the previous proposition can be replaced with the following  more explicit condition.
Consider  real numbers  $A, B>1$ with $A^2+B^2 \geq A^2 B^2.$ By using  (\ref{bounds2-chi}), one concludes that  if $n\geq c A$ and $k\leq n/B,$ then  the conditions for \eqref{Decay2beta}  are satisfied.
In particular, one may take $A=B \geq \sqrt{2}.$
\end{remark}

In order to get from \eqref{Decay2beta}, a quantitative decay estimate for the Legendre
expansion coefficients $(\beta_k^n)_k,$ one needs some precise behaviour as well as the decay rate  of the eigenvalues
$\mu_n(c)$ or $\lambda_n(c).$ These issues have been the subjects of many theoretical and numerical studies. To cite but a few
\cite{Bonami-Karoui2, Landau1, LW,   Slepian2, Widom}. In particular in \cite{Bonami-Karoui2},
it has been shown that
\begin{equation}
\lambda_n(c)\leq A(n,c)\left(\frac{ec}{2(2n+1)}\right)^{2n+1},\quad A(n, c)=\delta_1 n^{\delta_2}\left(\frac c{c+1}\right)^{-\delta_3}e^{+\frac {\pi^2}4 \frac{c^2}{n}}.
\end{equation}
As a consequence of the previous inequality, we have the following lemma borrowed  from  \cite{Bonami-Karoui2}.
 \begin{lemma} There exist   constants $a>0$ and $\delta\geq 1$ such that, for $c\geq 1$ and $n>1.35\, c$, we have
 \begin{equation}\label{best-bis}
 \lambda_n(c)\leq \delta e^{-an}.
\end{equation}
\end{lemma}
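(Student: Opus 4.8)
The plan is to derive \eqref{best-bis} directly from the explicit bound on $\lambda_n(c)$ stated just before the lemma, namely
$$\lambda_n(c)\leq A(n,c)\left(\frac{ec}{2(2n+1)}\right)^{2n+1},\qquad A(n,c)=\delta_1 n^{\delta_2}\left(\frac{c}{c+1}\right)^{-\delta_3}e^{+\frac{\pi^2}{4}\frac{c^2}{n}}.$$
The point is that under the hypothesis $n>1.35\,c$, equivalently $c<n/1.35$, the ratio $\frac{ec}{2(2n+1)}$ is bounded away from $1$; indeed $\frac{ec}{2(2n+1)}<\frac{ec}{4n}<\frac{e}{4\cdot 1.35}<\frac{e}{5.4}$, which is a constant strictly less than $1$ (about $0.503$). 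So the factor $\left(\frac{ec}{2(2n+1)}\right)^{2n+1}$ already decays like $\rho^{2n}$ for a fixed $\rho=e/5.4<1$, i.e. like $e^{-bn}$ with $b=-2\log\rho>0$.

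Next I would absorb the polynomial and exponential pieces of $A(n,c)$ into the geometric decay. The factor $\delta_1 n^{\delta_2}$ grows only polynomially in $n$, hence is $\leq C_\varepsilon e^{\varepsilon n}$ for any $\varepsilon>0$; choosing $\varepsilon$ small compared with $b$ keeps a net exponential decay. The factor $\left(\frac{c}{c+1}\right)^{-\delta_3}=\left(1+\frac1c\right)^{\delta_3}$ is bounded by $2^{\delta_3}$ for $c\geq 1$, a constant. The delicate factor is $e^{\frac{\pi^2}{4}\frac{c^2}{n}}$: here one uses $c<n/1.35$ again, so $\frac{c^2}{n}<\frac{c}{1.35}<\frac{n}{1.35^2}$, whence $e^{\frac{\pi^2}{4}\frac{c^2}{n}}\leq e^{\frac{\pi^2}{4\cdot1.35^2}\,n}$. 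One must check that $\frac{\pi^2}{4\cdot 1.35^2}$ is strictly smaller than $b$ so that the product of this exponential with the geometric decay is still an exponential decay $e^{-an}$ for some $a>0$. A quick numerical check: $b=-2\log(e/5.4)=2(\log 5.4-1)\approx 2(1.686-1)=1.373$, while $\frac{\pi^2}{4\cdot1.35^2}\approx \frac{9.87}{7.29}\approx 1.354$; these are uncomfortably close, so in the actual argument one would want a slightly more careful bound — e.g. keeping the $2n+1$ rather than $2n$ in the exponent, or using $\frac{ec}{2(2n+1)}$ rather than the cruder $\frac{ec}{4n}$ — to get a clean positive margin. This margin-checking is the main obstacle; everything else is routine bookkeeping.

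Once the margin is secured, I would collect constants: there is an $a_0>0$ (depending only on the universal constants $\delta_1,\delta_2,\delta_3$ and on the gap between $1.354$ and $1.373$, sharpened as above) and a $\delta\geq 1$ such that $\lambda_n(c)\leq \delta e^{-a_0 n}$ for all $c\geq 1$ and all integers $n>1.35\,c$. Since the statement only asserts existence of such $a>0$ and $\delta\geq 1$, this completes the proof. I would remark that the threshold $1.35$ is not sharp; any constant strictly larger than $e/4\approx 0.679$ would make the geometric factor decay, but one needs it large enough to dominate the $e^{\frac{\pi^2}{4}c^2/n}$ growth, and $1.35$ is comfortably in the safe range after the sharpening. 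Alternatively, and perhaps cleanest for exposition, I would cite \cite{Bonami-Karoui2} where this lemma is proved, and present only the short reduction above as a sketch, since the excerpt explicitly says the lemma is "borrowed from \cite{Bonami-Karoui2}."
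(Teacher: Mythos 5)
Your derivation is correct and follows exactly the route the paper intends: the paper states this lemma as a direct consequence of the displayed bound $\lambda_n(c)\leq A(n,c)\left(\frac{ec}{2(2n+1)}\right)^{2n+1}$ and otherwise simply defers to \cite{Bonami-Karoui2}, giving no further details. Note also that your own crude estimates already yield a strictly positive margin ($2(\log 5.4-1)\approx 1.3727$ versus $\pi^2/(4\cdot 1.35^2)\approx 1.3538$), so the sharpening you worry about is not actually needed before absorbing the polynomial factor $\delta_1 n^{\delta_2}$ into $e^{\varepsilon n}$ with $\varepsilon$ smaller than that margin.
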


\section{Quality of the spectral approximation by the PSWFs}

In this section, we first briefly recall the quality of approximation of
band-limited and almost band-limited functions by the classical PSWFs, $\psi_{n}$
that are  concentrated  on $[-b,b],$ for some $b>0.$ These results are mainly due to Landau, Pollack and Slepian, see \cite{Landau, Slepian1}.
For the sake of convenience, we also give some hints to the proofs of these results.
 Then, we show how to extend this study to the case of periodic and non periodic Sobolev
space $H^s([-1,1]), s>0.$

\subsection{Approximation of almost time and band-limited functions}

In this paragraph, $\|\cdot\|_2$ denotes the norm in $L^2(\R)$.
We show that the set $\{\psi_{n}(x),\,\,
n\geq 0\}$ is well adapted for the representation of almost
time-limited and almost band-limited functions, which are defined
as follows.

\begin{definition}
 Let $T=[-a,+a]$ and $\Omega=[-b, +b]$ be two intervals. A
function  $f$, which we assume to be normalized in such a way that
$\|f\|_2=1$, is said to be $\epsilon_T-$concentrated in $T$ and
$\epsilon_{\Omega}-$band concentrated in $\Omega$ if
$$\int_{T^c} |f(t)|^2\, dt \leq \epsilon_T^2,\qquad
\frac 1{2\pi}\int_{\Omega^c} |\widehat f(\omega)|^2\, d\omega \leq
\epsilon_{\Omega}^2.$$
\end{definition}

Up to a re-scaling of the function $f$, we can always assume that
$T=[-1, 1]$ and $\Omega=[-c, +c]$, with $c=ab$. Indeed, for $f$
that is $\epsilon_T-$concentrated in $T=[-a, +a]$ and
$\epsilon_{\Omega}-$band concentrated in $\Omega=[-b, +b]$, the
normalized function $g(t)=\sqrt a f(at)$ is
$\epsilon_T-$concentrated in $[-1, +1]$ and
$\epsilon_{\Omega}-$band concentrated in $[-ab, +ab]$.

Before stating the theorem, let us give some notations. For $f\in L^2(\mathbf R),$
 we consider its expansion $f=\sum_{n\geq
0} a_n \psi_{n,c}$ in $L^2([-1,+1]).$ Due to the
normalization of the functions $\psi_{n,c}$ given by \eqref{eqq1.4}, the following equality holds,
\begin{equation}\label{plancherel-1+1}
   \int_{-1}^{+1}|f(t)|^2 dt=\sum_{n\geq 0}|a_n|^2.
\end{equation}
 We call $S_{N,c}f,$  the $N$-th partial sum, defined by
\begin{equation}\label{plancherel}
   S_{N,c}f(t)=\sum_{n<N}a_n \psi_{n,c}(t).
\end{equation}
 We write more simply $S_Nf$ when there is no ambiguity. In the next lemma, we prove that $S_N f$ tends to $f$ rapidly
 when $f$ belongs to the space of band-limited functions. This statement is both very simple and classical, see for instance \cite{Landau, Slepian1}.

 \begin{lemma}\label{band-limited} Let $f\in B_c$ be an $L^2$ normalized function. Then
 \begin{equation}\label{eq6.0}
 \int_{-1}^{+1}|f- S_Nf|^2 dt\leq   \lambda_N(c).
\end{equation}
\end{lemma}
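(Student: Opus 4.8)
The plan is to expand $f$ in the orthonormal basis of the Paley--Wiener space $B_c$ furnished by the PSWFs once extended to the whole line, and to play the two normalizations recorded in \eqref{eqq1.4} against each other. Recall that if $P_\Omega$ denotes the orthogonal projection of $L^2(\R)$ onto $B_c$ (whose reproducing kernel is $\frac{\sin c(x-y)}{\pi(x-y)}$) and $\mathbf 1_I$ is multiplication by the indicator of $I=[-1,1]$, then the eigenvalue relation for $\mathcal Q_c$ reads, in its version valid on all of $\R$,
\[
P_\Omega\bigl(\mathbf 1_I\,\psi_{n,c}\bigr)=\lambda_n(c)\,\psi_{n,c}\qquad\text{on }\R .
\]
First I would use this to read off the $L^2(\R)$ inner products of the extended PSWFs: for all $n,m\ge0$,
\[
\langle\psi_{n,c},\psi_{m,c}\rangle_{L^2(\R)}
=\tfrac1{\lambda_n(c)}\langle P_\Omega(\mathbf 1_I\psi_{n,c}),\psi_{m,c}\rangle_{L^2(\R)}
=\tfrac1{\lambda_n(c)}\langle\mathbf 1_I\psi_{n,c},\psi_{m,c}\rangle_{L^2(\R)}
=\tfrac{\delta_{nm}}{\lambda_n(c)},
\]
where I use that $\psi_{m,c}\in B_c$, that $P_\Omega$ is self-adjoint, and the $L^2(I)$-orthonormality of the PSWFs. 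In particular $\{\sqrt{\lambda_n(c)}\,\psi_{n,c}\}_{n\ge0}$ is an orthonormal system in $L^2(\R)$; this is the classical statement that, after renormalization, the PSWFs form an orthonormal basis of $B_c$, but only Bessel's inequality will be needed.

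Next, write $f=\sum_{n\ge0}a_n\psi_{n,c}$ on $I$, with $a_n=\langle f,\psi_{n,c}\rangle_{L^2(I)}=\langle f,\mathbf 1_I\psi_{n,c}\rangle_{L^2(\R)}$. Since $f\in B_c$ one has $P_\Omega f=f$, hence
\[
a_n=\langle P_\Omega f,\mathbf 1_I\psi_{n,c}\rangle_{L^2(\R)}=\langle f,P_\Omega(\mathbf 1_I\psi_{n,c})\rangle_{L^2(\R)}=\lambda_n(c)\,\langle f,\psi_{n,c}\rangle_{L^2(\R)} .
\]
Setting $b_n=\langle f,\sqrt{\lambda_n(c)}\,\psi_{n,c}\rangle_{L^2(\R)}$, this says $a_n=\sqrt{\lambda_n(c)}\,b_n$, while Bessel's inequality for the orthonormal system above gives $\sum_{n\ge0}|b_n|^2\le\|f\|_2^2=1$.

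Finally, by Parseval in $L^2(I)$ applied to $f-S_Nf$, namely \eqref{plancherel-1+1}, together with the monotonicity $\lambda_N(c)\ge\lambda_n(c)$ for $n\ge N$,
\[
\int_{-1}^{+1}|f-S_Nf|^2\,dt=\sum_{n\ge N}|a_n|^2=\sum_{n\ge N}\lambda_n(c)\,|b_n|^2\le\lambda_N(c)\sum_{n\ge N}|b_n|^2\le\lambda_N(c),
\]
which is \eqref{eq6.0}. The only point that is not pure bookkeeping is the first step: one must use the eigenfunction equation of $\mathcal Q_c$ in the form valid on the whole real line, so that band-limiting $\mathbf 1_I\psi_{n,c}$ returns exactly $\lambda_n(c)\psi_{n,c}$; after that the estimate is merely Bessel's inequality combined with the decay of the $\lambda_n(c)$.
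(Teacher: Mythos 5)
Your proof is correct and follows essentially the same route as the paper's: expand $f$ in the PSWFs viewed as an orthogonal basis of $B_c$ with $\|\psi_{n,c}\|_{L^2(\R)}^2=\lambda_n(c)^{-1}$, relate the $L^2(I)$ coefficients to the $L^2(\R)$ ones, and use the monotonicity of the $\lambda_n(c)$. The only difference is cosmetic: you derive the $L^2(\R)$-orthogonality and the coefficient relation explicitly from the eigenvalue equation of $\mathcal Q_c$ and use Bessel's inequality, whereas the paper simply invokes the known Plancherel identity \eqref{plancherelR} on $\R$.
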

 \begin{proof} Since the set of functions $\psi_{n,c}$ is also an orthogonal basis of $B_c$, the function $f$ may be written on $\R$ as $f=\sum_{n\geq 0} a_n \psi_{n,c}$, with
  \begin{equation}\label{plancherelR}
   \int_{\R}|f(t)|^2 dt=\sum_{n\geq 0}|\lambda_n(c)|^{-1}|a_n|^2.
\end{equation}
The two expansions coincide on $[-1, +1]$, and, from
\eqref{plancherelR} applied to $f-S_N f$, it follows that
$$\int_{-1}^{+1}|f- S_Nf|^2 dt\leq  \sup_{n\geq N} |\lambda_n(c)| \sum_{n\geq N}|\lambda_n(c)|^{-1}|a_n|^2.$$
We use the fact that the sequence $ |\lambda_n(c)|$ decreases and
\eqref{plancherelR} to conclude.
\end{proof}

Next we define  the time-limiting operator $P_T$ and the
band-limiting operator $\Pi_{\Omega}$ by:
$$ P_T(f) (x) = \chi_T(x) f(x),\qquad
\Pi_{\Omega}(f)(x)=\frac{1}{2\pi}\int_{\Omega} e^{i x
\omega}\widehat{f}(\omega)\, d\omega.$$ The following proposition
provides us with the quality of approximation of almost time- and
band-limited functions by the PSWFs.
\begin{proposition}
If $f$ is an $L^2$ normalized function that is
$\epsilon_T-$concentrated in $T=[-1, +1]$ and
$\epsilon_{\Omega}-$band concentrated in $\Omega=[-c, +c],$ then
for any positive integer $N,$ we have
\begin{equation}\label{eqqq6.0}
\left( \int_{-1}^{+1}|f- S_Nf|^2 dt\right)^{1/2} \leq  \epsilon_{\Omega}+\sqrt{\lambda_N(c)}
 \end{equation}
and, as a consequence,
\begin{equation}\label{eeqq6.0}
\|f - P_T S_N f\|_2 \leq
\epsilon_T+\epsilon_{\Omega}+\sqrt{\lambda_N(c)}.
\end{equation}
More generally, if $f$ is an $L^2$ normalized function that is
$\epsilon_T-$concentrated in $T=[-a, +a]$ and
$\epsilon_{\Omega}-$band concentrated in $\Omega=[-b, +b]$ then,
for $c=ab$ and for any positive integer $N,$ we have
\begin{equation}\label{eeqq6.01}
\|f - P_T S_{N,c,a}f\|_2 \leq \epsilon_T+\epsilon_{\Omega}+\sqrt
{\lambda_N(c)}
\end{equation}
where $S_{N,c,a}$ gives the $N$-th partial sum for the orthonormal
basis $\frac 1{\sqrt a}\psi_{n,c}(t/a)$ on $[-a, +a]$.
\end{proposition}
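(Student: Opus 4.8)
The plan is to establish \eqref{eqqq6.0} first, then deduce \eqref{eeqq6.0} by a triangle inequality, and finally obtain \eqref{eeqq6.01} by the scaling argument already described before the statement. For \eqref{eqqq6.0}, I would write $g = \Pi_\Omega f$ for the band-limited part of $f$; since $f$ is $\epsilon_\Omega$-band concentrated in $\Omega = [-c,+c]$, Plancherel gives $\|f - g\|_2 \leq \epsilon_\Omega$. The function $g$ lies in $B_c$, but it need not be $L^2$-normalized, so I would apply Lemma~\ref{band-limited} in the slightly more general form $\left(\int_{-1}^{+1}|g - S_N g|^2\,dt\right)^{1/2} \leq \sqrt{\lambda_N(c)}\,\|g\|_2 \leq \sqrt{\lambda_N(c)}$, using $\|g\|_2 \leq \|f\|_2 = 1$ (the proof of that lemma only uses that $g \in B_c$ and goes through verbatim without the normalization, with $\|g\|_2^2 = \sum |\lambda_n|^{-1}|a_n|^2$ on the right). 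Then, since $S_N$ is a projection and hence a contraction on $L^2([-1,+1])$,
\begin{equation*}
\left(\int_{-1}^{+1}|f - S_N f|^2\,dt\right)^{1/2} \leq \|f - g\|_{L^2(I)} + \left(\int_{-1}^{+1}|g - S_N g|^2\,dt\right)^{1/2} + \|S_N(g - f)\|_{L^2(I)} \leq 2\|f-g\|_{L^2(I)} + \sqrt{\lambda_N(c)}.
\end{equation*}
This gives $2\epsilon_\Omega$ rather than $\epsilon_\Omega$; to recover the sharp constant I would instead decompose directly: $f - S_N f = (g - S_N g) + (f - g) - S_N(f-g)$, and note that $(f-g) - S_N(f-g) = (I - S_N)(f-g)$ is the $L^2([-1,+1])$-orthogonal projection of $f - g$ onto $\overline{\mathrm{span}}\{\psi_{n,c} : n \geq N\}$, while $g - S_N g$ lies in that same subspace. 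Hence $g - S_N g$ and $(I-S_N)(f-g)$ are "aligned" only in the sense of lying in the same closed subspace, so the triangle inequality there still costs a factor; the cleaner route is to observe $\|(I-S_N)(f-g)\|_{L^2(I)} \leq \|f-g\|_{L^2(I)} \leq \epsilon_\Omega$ and that $(I-S_N)f = (I-S_N)g + (I-S_N)(f-g)$, so $\|(I-S_N)f\|_{L^2(I)} \leq \|(I-S_N)g\|_{L^2(I)} + \epsilon_\Omega \leq \sqrt{\lambda_N(c)} + \epsilon_\Omega$, which is exactly \eqref{eqqq6.0}.

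Next, for \eqref{eeqq6.0}, I would use that $P_T$ is an $L^2(\R)$-contraction (multiplication by $\chi_T$) and that $P_T f = f$ fails only off $T$, where $f$ is $\epsilon_T$-small. Precisely,
\begin{equation*}
\|f - P_T S_N f\|_2 \leq \|f - P_T f\|_2 + \|P_T(f - S_N f)\|_2 \leq \epsilon_T + \left(\int_{-1}^{+1}|f - S_N f|^2\,dt\right)^{1/2} \leq \epsilon_T + \epsilon_\Omega + \sqrt{\lambda_N(c)},
\end{equation*}
where the middle step uses $\|f - P_T f\|_2^2 = \int_{T^c}|f|^2 \leq \epsilon_T^2$ and the fact that $P_T(f-S_Nf)$ is supported in $T = [-1,+1]$, so its $L^2(\R)$ norm equals the $L^2([-1,+1])$ norm of $f - S_N f$ (both $S_N f$ and the relevant part of $f$ live on $[-1,+1]$). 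Then \eqref{eqqq6.0} finishes it.

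Finally, \eqref{eeqq6.01} follows from \eqref{eeqq6.0} by the rescaling already recorded: if $f$ is $\epsilon_T$-concentrated in $[-a,+a]$ and $\epsilon_\Omega$-band concentrated in $[-b,+b]$, then $g(t) = \sqrt{a}\,f(at)$ is $L^2$-normalized, $\epsilon_T$-concentrated in $[-1,+1]$, and $\epsilon_\Omega$-band concentrated in $[-ab,+ab] = [-c,+c]$. Applying \eqref{eeqq6.0} to $g$ and undoing the (isometric) change of variables translates $P_{[-1,1]} S_{N,c} g$ back into $P_{[-a,a]} S_{N,c,a} f$, since the basis $\frac{1}{\sqrt a}\psi_{n,c}(t/a)$ on $[-a,+a]$ is exactly the pullback of $\psi_{n,c}$ on $[-1,+1]$; the $L^2$ norms are preserved, so the same bound $\epsilon_T + \epsilon_\Omega + \sqrt{\lambda_N(c)}$ holds. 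I expect no serious obstacle here; the only point requiring care is bookkeeping the constant in \eqref{eqqq6.0} (avoiding a spurious factor of $2$ on $\epsilon_\Omega$), which is handled by decomposing $(I - S_N)f$ rather than $f - S_N f$ and using that $g - S_N g$ already lies in the range of $I - S_N$, together with the generalized (non-normalized) form of Lemma~\ref{band-limited}.
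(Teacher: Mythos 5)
Your proof is correct and follows essentially the same route as the paper: decompose $f=\Pi_\Omega f+(f-\Pi_\Omega f)$, apply Lemma~\ref{band-limited} to the band-limited part and the contraction property of $I-S_N$ on $L^2(I)$ to the remainder, then get \eqref{eeqq6.0} by the triangle inequality and \eqref{eeqq6.01} by rescaling. Your extra care with the non-normalized form of Lemma~\ref{band-limited} (using $\|\Pi_\Omega f\|_2\leq\|f\|_2=1$) and with avoiding a spurious factor of $2$ on $\epsilon_\Omega$ simply makes explicit two points that the paper's one-line proof glosses over.
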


\begin{proof}[{\bf Proof:}] We first  prove \eqref{eqqq6.0} by writing $f$ as the sum of $\Pi_\Omega f$ and $g$.
Remark first that $\int_{-1}^{+1}|g-S_N g|^2 dt\leq \|g\|_2 \leq \epsilon_\Omega$. We then use Lemma \ref{band-limited}
for the band-limited function $\Pi_\Omega f$ to conclude. The rest of the proof follows at once.
\end{proof}

\begin{remark} Let $f$ be a normalized $L^2$ function that vanishes outside $I$ and  we assume that
 $f\in H^s(\R)$.
Then $f$ gives an example of $0$-concentrated in $I$ and $\epsilon_c$-band concentrated in $[-c, +c]$,
with $\epsilon_c\leq M_f/c^s$ and ${\displaystyle M_f^2=\frac 1 {2\pi}\int |\widehat f(\xi)|^2|\xi|^{2s} d\xi.}$
\end{remark}

\subsection{Approximation by the PSWFs in Sobolev spaces}

In this paragraph, we study the quality of approximation by the
PSWFs in the Sobolev space $H^s([-1,1]).$  We
 provide an $L^2([-1,1])$-error bound of the approximation
of a function  $f\in H^s([-1,1])$ by the $N-$th partial sum of its
expansion in the   PSWFs basis. To simplify notation we will write $I=[-1,1]$.

We should mention that
different spectral approximation results by the PSWFs in $H^s(I)$ have
been already given in \cite{Boyd1, Chen, Wang}.  It is important to mention that the  error bounds of the spectral
approximations given by the previous references  do not indicate
how to choose a    ``good" value of the bandwidth $c$
to approximate a given $f\in H^s(I).$ By a simultaneous use of the
properties of the PSWFs as eigenfunctions of the differential
operator $\mathcal L_c$ and the integral operator $\mathcal F_c,$ we give a first
answer to this question. This is the subject of the following
theorem.

\begin{theorem}\label{approx1}
Let  $c > 0$ be a positive real number. Assume that $f\in
H^s(I)$, for some positive real number $s>0$. Then for any
integer $N\geq 1,$ we have
\begin{equation}\label{eq222.1}
\| f-S_N f\|_{L^2(I)}\leq K(1+c^2)^{-s/2} \|
f\|_{H^s(I)}+ K\sqrt{ \lambda_N(c)} \|f\|_{L^2(I)}.
\end{equation}
Here, the constant $K$  depends only on $s$ and on the extension operator from
$H^s(I)$ to $H^s(\mathbb R).$ Moreover it can be
taken equal to $1$ when $f$ belongs to the space $H^s_0(I)$.
\end{theorem}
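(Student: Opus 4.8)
The plan is to reduce the statement to the two building blocks already in hand: the Sobolev-based band-concentration estimate recorded in the Remark after the previous Proposition, and the band-limited convergence estimate of Lemma \ref{band-limited}, glued together by the Proposition on almost time- and band-limited functions. First I would extend $f\in H^s(I)$ to a function $\tilde f\in H^s(\mathbb R)$ using a bounded extension operator $E$, so that $\|\tilde f\|_{H^s(\mathbb R)}\le C_s\|f\|_{H^s(I)}$; this is where the dependence of $K$ on the extension operator enters. In the special case $f\in H^s_0(I)$ one takes $\tilde f$ to be the extension by zero, which is bounded with norm $1$ on $H^s_0(I)$, and this is what will allow $K=1$ there. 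Normalizing, write $g=\tilde f/\|\tilde f\|_2$; then $g$ is $L^2$-normalized, it is $0$-concentrated in $I$ (in the $H^s_0$ case) or, in general, $\epsilon_T$-concentrated in $I$ with $\epsilon_T$ controlled by $\|f\|_{H^s}$, and by the Remark it is $\epsilon_c$-band concentrated in $[-c,+c]$ with $\epsilon_c\le M_{\tilde f}/c^s$, where $M_{\tilde f}^2=\frac1{2\pi}\int|\widehat{\tilde f}(\xi)|^2|\xi|^{2s}\,d\xi\le \|\tilde f\|_{H^s(\mathbb R)}^2$.

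Next I would apply the Proposition, specifically inequality \eqref{eqqq6.0}, to $g$: this gives
\begin{equation*}
\Big(\int_{-1}^{+1}|g-S_N g|^2\,dt\Big)^{1/2}\le \epsilon_c+\sqrt{\lambda_N(c)}.
\end{equation*}
Multiplying through by $\|\tilde f\|_2$ and using that $S_N$ is linear and that $g$ and $f$ agree on $I$ (so $S_N g$ and $S_N f$ agree on $I$, since the coefficients $\langle g,\psi_{n,c}\rangle$ are computed as integrals over $I$), we obtain
\begin{equation*}
\|f-S_N f\|_{L^2(I)}\le \|\tilde f\|_2\,\epsilon_c+\|\tilde f\|_2\sqrt{\lambda_N(c)}.
\end{equation*}
The second term is bounded by $\|\tilde f\|_2\sqrt{\lambda_N(c)}\le C_s\|f\|_{L^2(I)}\sqrt{\lambda_N(c)}$ once one notes $\|\tilde f\|_2$ can itself be taken comparable to $\|f\|_{L^2(I)}$ if the extension is built to be $L^2$-bounded as well; alternatively one simply keeps $\|f\|_{H^s(I)}$ here, which is weaker but still of the stated form. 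The first term is $\|\tilde f\|_2\,\epsilon_c\le M_{\tilde f}\,c^{-s}\le C_s\|f\|_{H^s(I)}\,c^{-s}$.

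The remaining point is purely cosmetic: the theorem is stated with $(1+c^2)^{-s/2}$ rather than $c^{-s}$, which is the more natural way to write it so that the bound is non-vacuous and uniform for small $c$ as well. I would obtain this by the elementary observation that one may always also bound $\|f-S_Nf\|_{L^2(I)}$ by $\|f\|_{L^2(I)}\le\|f\|_{H^s(I)}$ trivially (taking $S_N f$ against the first few terms), so for $c\le 1$ the first term on the right of \eqref{eq222.1} already dominates with any fixed $K\ge 1$, while for $c\ge 1$ we have $c^{-s}\le 2^{s/2}(1+c^2)^{-s/2}$; absorbing the constants $C_s$, $2^{s/2}$ into a single $K=K(s)$ finishes the argument, and in the $H^s_0(I)$ case the chain of inequalities was carried out with extension norm $1$, giving $K=1$. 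The main obstacle I anticipate is not any single estimate but the bookkeeping around the extension operator: making precise that the same constant controls both the $H^s$ norm and (where needed) the $L^2$ norm of $\tilde f$, and checking that for $f\in H^s_0(I)$ the zero-extension genuinely lies in $H^s(\mathbb R)$ with no loss — this uses the standard fact that $H^s_0(I)$ coincides with the closure of $C_c^\infty(I)$ and that zero-extension is an isometry from that closure into $H^s(\mathbb R)$ (for the relevant range of $s$, with the usual care when $s-1/2$ is an integer, which only affects the definition of $H^s_0$ and not the inequality).
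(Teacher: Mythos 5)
Your proposal follows essentially the same route as the paper's proof: extend $f$ to $\tilde f\in H^s(\mathbb R)$ by a bounded extension operator, split $\tilde f$ in frequency at $|\xi|=c$, control the band-limited piece through Lemma \ref{band-limited} and the high-frequency piece through its $L^2$ smallness. Passing through the Proposition on almost band-limited functions instead of redoing the splitting is only a cosmetic difference, since the proof of \eqref{eqqq6.0} is exactly that splitting; your discussion of time-concentration is superfluous but harmless, as \eqref{eqqq6.0} uses only band concentration. The main inequality \eqref{eq222.1} is correctly obtained this way.

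There is, however, one genuine defect: your argument does not deliver the final claim that $K$ can be taken equal to $1$ when $f\in H^s_0(I)$. By invoking the Remark's bound $\epsilon_c\le M_{\tilde f}\,c^{-s}$ you get the first term in the form $c^{-s}\|\tilde f\|_{H^s}$, and since $c^{-s}/(1+c^2)^{-s/2}=(1+c^{-2})^{s/2}$ is always $>1$ and blows up as $c\to0$, your repair --- treating $c\le 1$ by the trivial bound and paying $2^{s/2}$ for $c\ge 1$ --- necessarily produces $K\ge 2^{s/2}>1$ even for the zero extension. The fix is to estimate the high-frequency tail with the inhomogeneous weight directly: on $\{|\xi|>c\}$ one has $(1+|\xi|^2)^{-s}\le (1+c^2)^{-s}$, hence
\begin{equation*}
\frac1{2\pi}\int_{|\xi|>c}|\widehat{\tilde f}(\xi)|^2\,d\xi\;\le\;(1+c^2)^{-s}\,\|\tilde f\|_{H^s(\mathbb R)}^2,
\end{equation*}
which gives $\epsilon_c\le (1+c^2)^{-s/2}\|\tilde f\|_{H^s(\mathbb R)}/\|\tilde f\|_{L^2}$ with no extra constant and no case distinction in $c$, and yields $K=1$ when $\tilde f$ is the extension of $f\in H^s_0(I)$ by zero. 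This is what the paper does (implicitly), and it removes the only obstacle in your argument.
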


\begin {proof} To prove  (\ref{eq222.1}), we first use the fact that for
any real number $s\geq 0,$ there exists a linear and continuous
extension operator $E: H^s(I)\rightarrow H^s(\mathbb R).$
Moreover, if $f\in H^s(I)$ and $F= E(f) \in H^s(\mathbb R),$
then there exists a constant $K>0$ such that
\begin{equation}\label{extension}
\| F \|_{L^2(\mathbb R)} \leq K \| f\|_{L^2(I)},\qquad \|
F\|_{H^s(\mathbb R)} \leq K \|f\|_{H^s(I)}.
\end{equation} We recall that the Sobolev norm of a function F on $\R$ is given by
$$\|F\|_{H^s(\R)}^2 =\frac{1}{2\pi}\int_{\R} (1+|\xi|^2)^s|\widehat f (\xi)|^2\, d\xi.
$$
In particular, for a $c-$bandlimited function $F$ , one has
$$\|F\|_{L^2(\R)}^2\leq (1+c^2)^{-s}\|F\|_{H^s(\R)}^2.$$
Next,
if ${\mathcal F}$ denotes the Fourier transform operator and if
$${\cal G} = {\cal F}^{-1}(\widehat F \cdot 1_{[-c,c]}),\quad {\cal
H}={\cal F}^{-1}(\widehat F \cdot (1-1_{[-c,c]})),$$ then ${\cal
G}$ is $c-$bandlimited and $F={\cal G} + {\cal H}.$ Moreover,
since $\| \widehat {\mathcal G}\|_{L^2(\mathbb R)}\leq \|\widehat
F\|_{L^2(\mathbb R)}$ and $\|  {\mathcal H}\|_{L^2(\mathbb R)}\leq
c^{-s} \|F\|_{H^s(\mathbb R)},$ then by using (\ref{extension}),
one gets
\begin{equation}\label{eq222.3}
\| {\mathcal G}\|_{L^2(\mathbb R)}\leq K \|f\|_{L^2(I)},\quad \|
{\mathcal H}\|_{L^2(I)}\leq K (1+c^2)^{-s/2} \| f\|_{H^s(I)}.
\end{equation}
Finally, by using the previous inequalities and the fact that
${\mathcal G}$ is $c-$bandlimited, one concludes that
\begin{eqnarray*}
\| f- S_N f\|_{L^2(I)}&\leq & \|\mathcal G - S_N \mathcal G\|_{L^2(I)}
+\|\mathcal H - S_N \mathcal H\|_{L^2(I)}\\
&\leq & \sqrt{ \lambda_N(c)} \|{\mathcal
G}\|_{L^2(\mathbb R)} +\|\mathcal H\|_{L^2(I)}\\
&\leq &  \sqrt{ \lambda_N(c)} K \|f\|_{L^2(I)} +
 K (1+c^2)^{-s}\| f\|_{H^s(I)}.
\end{eqnarray*}

This concludes the proof for general $f$. When $f$ is in the subspace $H^s_0(I)$, one can take as
extension operator the extension by $0$ outside $I$, so that the constant $K$ can be replaced by $1$.
\end{proof}



In \cite{Wang}, the author has used a different approach for the
study of the spectral approximation by the PSWFs. More precisely,
by considering the weighted Sobolev space $\widetilde H^r(I),$
associated with the differential operator ${\mathcal L}_c$
defined by
$$\widetilde H^r(I)= \left\{ f\in L^2(I),\, \|f\|^2_{\widetilde H^r(I)}=\|{\mathcal L}_c^{r/2} f\|^2=\sum_{k\geq 0}
(\chi_k)^r | f_k|^2 <+\infty\right\},$$ where $f=\sum f_k \psi_k$ is the expansion in the basis of PSWFs. Then for any $f\in \widetilde H^r(I),$ with $r\geq 0,$ we have
$$\|f-S_N f\|_{L^2(I)} \leq (\chi_N(c))^{-r/2} \|f\|_{\widetilde H^r(I)}\leq N^{-r} \|f\|_{\widetilde H^r(I)}.$$
For more details, the reader is referred to \cite{Wang}.

\begin{remark}
Compared the result of Theorem \ref{approx1}  with Wang's result, this latter  has the advantage to give an error term for all
values of $c$, while the first term in \eqref{eq222.1} is only
small for $c$ large enough. On the other hand, Wang compares his
specific Sobolev space with the classical one and finds that
$$\|f\|_{\widetilde{H^s}(I)}\leq C(1+c^2)^{s/2}\|f\|_{H^s(I)}.$$
For large values of $N$ we clearly have  ${\displaystyle \frac{(1+c^2)}{\chi_N}\ll
(1+c^2)^{-1},}$ but it goes the other way around when $\chi_N$ and
$1+c^2$ are comparable. So it may be useful to have both kinds of
estimates in mind for numerical purpose and for the choice of the
value of  $c.$
\end{remark}

\begin{remark}
 The error bound given by Theorem \ref{approx1} has the advantage to be explicitly given in terms of
 $c$ and $\lambda_n(c).$ Nonetheless, it has a drawback that it does not imply a rate of convergence, nor even the  convergence
 of $S_N(f)$ to $f$ in the usual $L^2(I)-$norm. To overcome this problem, we devote the remaining of this section
 to a more elaborated convergence analysis in the $2$-periodic Sobolev space $H^s_{per},$ then we extend this analysis
 to the usual $H^s(I)-$space.
\end{remark}

Next, we  consider  the subspace $H^s_{per}$ of functions in $H^s(I)$ that extend into $2-$periodic functions of the same regularity.
For such functions, one can also use the norm
$${\displaystyle \|f\|_{H^s_{per}}=\sum_{k\in \mathbb Z}
(1+(k\pi)^2)^s | b_k(f)  )|^2.}$$
 Here, $$ b_k(f)=\frac 1{\sqrt 2} \int_{-1}^{+1}f(x) e^{-i\pi k x} dx=\frac 1{\sqrt 2}\widehat f (k\pi) $$
is the coefficient of the Fourier series expansion of $f.$ We then have the following theorem.

 \begin{theorem}
Let $c\geq 1,$   then there exist constants $M\geq \sqrt{2}$ and $ M',\, a>0$ such that, when  $N\geq \max (c M,3)$ and  $f\in H^s_{per}, s>0$,  we have the inequality
\begin{equation}\label{error1}
\|f- S_N (f)\|_{L^2(I)}\leq M' (1+(\pi N)^2)^{-s/2}  \| f\|_{H^s_{per}}+ M' e^{-aN}  \| f\|_{L^2}.
\end{equation}
\end{theorem}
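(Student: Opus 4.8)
The plan is to split $f$ according to its low and high trigonometric frequencies, the threshold being $N/M$, where $M\ge\sqrt2$ is the constant appearing in the exponential decay estimate announced in the introduction, namely $|\langle e^{ik\pi x},\psi_{n,c}\rangle|\le M' e^{-an}$ for $|k|\le n/M$ and $n\ge\max(cM,3)$, which I take as already established. Write $f=g+h$ with
\[
g=\sum_{|k|\le N/M} b_k(f)\,\frac1{\sqrt2}e^{ik\pi x},\qquad h=f-g=\sum_{|k|>N/M}b_k(f)\,\frac1{\sqrt2}e^{ik\pi x}.
\]
Since $S_N$ is linear and, as an orthogonal projection on $L^2(I)$, a contraction, this gives
\[
\|f-S_Nf\|_{L^2(I)}\le\|g-S_Ng\|_{L^2(I)}+\|h-S_Nh\|_{L^2(I)}\le\|g-S_Ng\|_{L^2(I)}+\|h\|_{L^2(I)},
\]
so it suffices to bound these two terms separately.

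The high-frequency term is handled directly from the definition of the $\Hs$ norm. By Parseval for the orthonormal trigonometric system on $I$,
\[
\|h\|_{L^2(I)}^2=\sum_{|k|>N/M}|b_k(f)|^2\le\bigl(1+(\pi N/M)^2\bigr)^{-s}\sum_{k\in\mathbb Z}(1+(k\pi)^2)^s|b_k(f)|^2=\bigl(1+(\pi N/M)^2\bigr)^{-s}\|f\|_{\Hs}^2.
\]
Since $M\ge1$ one has $1+(\pi N/M)^2\ge M^{-2}(1+(\pi N)^2)$, whence $\|h\|_{L^2(I)}\le M^{s}(1+(\pi N)^2)^{-s/2}\|f\|_{\Hs}$; the $s$-dependent factor $M^s$ is absorbed into the final constant $M'$, giving exactly the first term of \eqref{error1}.

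For the low-frequency term, note that $g-S_Ng$ is a tail of the PSWF expansion of $g\in L^2(I)$ involving only indices $n\ge N$, so $\|g-S_Ng\|_{L^2(I)}^2\le\sum_{n\ge N}|\langle g,\psi_{n,c}\rangle|^2$. For every such $n$ we have $n\ge N\ge\max(cM,3)$ and every frequency in $g$ satisfies $|k|\le N/M\le n/M$, so the exponential decay estimate applies to each $\langle e^{ik\pi x},\psi_{n,c}\rangle$ and
\[
|\langle g,\psi_{n,c}\rangle|\le\frac1{\sqrt2}\sum_{|k|\le N/M}|b_k(f)|\,|\langle e^{ik\pi x},\psi_{n,c}\rangle|\le\frac{M'}{\sqrt2}\,e^{-an}\sum_{|k|\le N/M}|b_k(f)|\le C M'\sqrt N\,e^{-an}\|f\|_{L^2(I)},
\]
using Cauchy--Schwarz over the at most $2N/M+1\le 3N$ indices $k$ and $\sum_k|b_k(f)|^2=\|f\|_{L^2(I)}^2$. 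Summing the geometric series $\sum_{n\ge N}e^{-2an}\le C_a e^{-2aN}$ yields $\|g-S_Ng\|_{L^2(I)}\le C'\sqrt N\,e^{-aN}\|f\|_{L^2(I)}$, and absorbing $\sqrt N$ into the exponential (at worst replacing $a$ by $a/2$) produces the second term of \eqref{error1}. Collecting the two bounds and letting $M'$ denote the largest constant that occurred finishes the proof.

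The only substantial ingredient is the exponential decay of the coefficients $\langle e^{ik\pi x},\psi_{n,c}\rangle$ in the stated range of $k$; everything else is the elementary splitting above. That estimate is where Section 2 enters: one expresses $\langle e^{ik\pi x},\psi_{n,c}\rangle$ through the Legendre coefficients $\beta_j^n$ (equivalently through values of $\psi_{n,c}$ via $\mathcal F_c$), bounds the relevant $\beta_j^n$ by Proposition 3 — which introduces a factor growing at most geometrically in $n/M$ — and then uses the exponential smallness of $|\mu_n(c)|^2=\frac{2\pi}{c}\lambda_n(c)$ from Lemma 3; choosing $M$ large makes the geometric growth harmless against $e^{-an}$. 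The only mild point of care in the argument above is tracking the $s$-dependence of the constant multiplying the smoothness term.
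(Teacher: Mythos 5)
Your proof is correct and follows essentially the same route as the paper: the same low/high frequency splitting at threshold $N/M$, the same Sobolev-tail bound for the high frequencies, and the same reduction of the low-frequency part to the exponential decay of $\langle e^{ik\pi x},\psi_{n,c}\rangle$ (which the paper isolates as a separate lemma and proves exactly as you sketch, via the Legendre coefficient bounds and the eigenvalue decay). The only differences are cosmetic — you bound $\langle g,\psi_n\rangle$ directly by the triangle inequality rather than treating each exponential separately and summing by linearity — and your remark about the $M^s$ factor in the smoothness term is a point the paper glosses over as well.
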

\begin{proof}  We start with reductions of the problem, which are analogous to the ones that we have detailed above. It is sufficient to prove this separately with the constant $M'/2$ for periodic functions $g$ and $h=f-g$, where $g$ is the projection of $f$ onto the subspace of $H^s_{per}$ whose Fourier coefficients $b_k(f)$ are zero for $|k|>N/M.$ Moreover, we have directly the inequality without a second term, since the $L^2$ norm of $h$ may be  bounded by the first term multiplied by some constant. So, let us prove the inequality for $g$. This time we will prove that the inequality holds without the first  term, that is,
$$\|g- S_N (g)\|_{L^2(I)} \leq   \frac{M'}{2} e^{-aN}  \| g\|_{L^2(I)}.$$
The next reduction consists of restricting to exponentials $e^{ik\pi x}$, with $|k|\leq N/M$. Indeed, assume that we prove the previous  inequality  for all of them, with a uniform bound by $M''e^{-a'N}$. Then, by linearity we  will have
$$\|g- S_N (g)\|_{L^2(I)} \leq M''e^{-a'N}\sum | b_k(g)|\leq  M''e^{-a'N} \sqrt{2[N/M]+1}\;e^{-aN} \| g\|_{L^2(I)}.$$
This in turn gives up to a constant,  the required form by choosing $a<a'$.

So we  content ourselves to consider $f(x)=e^{ik\pi x}$, with $|k|\leq N/M$. Finally, since $$\|f-S_N f\|_{L^2(I)}^2=\sum |\langle f, \psi_n\rangle|^2,$$
then  it is sufficient to have such an estimate for each $n>N$, and conclude by taking the sum $\sum_{n> N} e^{-an}$. So the proof is a consequence of the following lemma.
\end{proof}
The following well known identity  will be needed in the sequel.
\begin{equation}
\label{FourierPn}
\int_{-1}^1 e^{i \lambda x}
\overline{P_{n}}(x)\, dx= i^n
\sqrt{\frac{2\pi}{\lambda}}\sqrt{n+1/2} J_{n+1/2}(\lambda),
\end{equation}
where $J_{n+1/2}(\cdot)$ is the Bessel function of the first type and order $n+1/2.$

\begin{lemma}
Let $c\geq 1,$ then there exist constants $M \geq  \sqrt{2}$ and $  M', \, a>0$ such that, when  $n\geq \max\left( c M, 3\right)$ and  $f(x)=e^{ik\pi x}$ with $|k|\leq n/M,$ we have
\begin{equation}
|\langle f, \psi_n\rangle|\leq M'e^{-an}.
\end{equation}
\end{lemma}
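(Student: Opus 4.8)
The plan is to expand the exponential $f(x)=e^{ik\pi x}$ in the normalized Legendre basis and pair it term by term with the expansion \eqref{eqqq3.2} of $\psi_n$. Writing $e^{ik\pi x}=\sum_{m\ge0}c_m^k\,\overline{P_m}(x)$, identity \eqref{FourierPn} gives $c_m^k=i^m\sqrt{2/k}\,\sqrt{m+1/2}\,J_{m+1/2}(k\pi)$ for $k\neq0$, and $c_m^0=\sqrt2\,\delta_{m,0}$, so by orthonormality $\langle f,\psi_n\rangle=\sum_{m\ge0}\beta_m^n c_m^k$ and $|\langle f,\psi_n\rangle|\le\sum_{m\ge0}|\beta_m^n|\,|c_m^k|$. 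The case $k=0$ is immediate: only $m=0$ survives and $|\beta_0^n|\le\frac1{\sqrt2}|\mu_n(c)|$ by \eqref{beta0} and \eqref{bound1psi}, whence $|\langle f,\psi_n\rangle|\le|\mu_n(c)|=\sqrt{\tfrac{2\pi}c\lambda_n(c)}\le\sqrt{2\pi\delta}\,e^{-an/2}$ using \eqref{best-bis} (valid since $n\ge cM>1.35\,c$). So I may assume $k\ge1$ (the value $-k$ yields the same modulus), and I split the sum at $m_0=\lfloor n/2\rfloor$.

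For the low range $0\le m\le m_0$ I use the crude bound $|c_m^k|\le\|e^{ik\pi\cdot}\|_{L^2(I)}\|\overline{P_m}\|_{L^2(I)}=\sqrt2$ together with \eqref{Decay2beta}. Its hypothesis $m(m-1)+1.13\,c^2\le\chi_n$ holds for all $m\le m_0$ once $M$ is chosen with $\tfrac14+\tfrac{1.13}{M^2}\le1$ (using $\chi_n\ge n(n+1)\ge n^2$, $c\le n/M$, $m\le n/2$), and then also $q=c^2/\chi_n\le M^{-2}<1$, so that \eqref{bounds2-chi} applies as well. Since $2/\sqrt q=2\sqrt{\chi_n}/c>2M>1$, the resulting geometric sum is dominated by its top term and the low range is $\le C\,|\mu_n(c)|\,(2\sqrt{\chi_n}/c)^{n/2}$. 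The point is now to absorb this factor into $\mu_n(c)$: using $|\mu_n(c)|^2=\tfrac{2\pi}c\lambda_n(c)$ with the sharp bound $\lambda_n(c)\le A(n,c)\big(\tfrac{ec}{2(2n+1)}\big)^{2n+1}$ recalled above and $\sqrt{\chi_n}\le\tfrac\pi2(n+1)$ from \eqref{bounds2-chi}, one gets
$$|\mu_n(c)|^2\,(2\sqrt{\chi_n}/c)^{n}\le C\,A(n,c)\Big(\tfrac{e^2 c\sqrt{\chi_n}}{2(2n+1)^2}\Big)^{n}\le C\,A(n,c)\Big(\tfrac{e^2\pi}{16M}\Big)^{n},$$
the last step using $\tfrac{n(n+1)}{(2n+1)^2}<\tfrac14$. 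Since $A(n,c)$ contributes only a polynomial factor times $e^{\pi^2c^2/(4n)}\le e^{\pi^2 n/(4M^2)}$, choosing $M$ large enough that $e^{\pi^2/(4M^2)}\cdot\tfrac{e^2\pi}{16M}<1$ bounds the low range by $M_1'e^{-a_1n}$ with $M_1',a_1>0$ depending only on the chosen $M$ (not on $c$).

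For the high range $m>m_0$ I use $|\beta_m^n|\le1$ (from $\sum_m|\beta_m^n|^2=\int_I|\psi_n|^2=1$ by \eqref{eqq1.4}) and the classical estimate $|J_\nu(z)|\le(z/2)^\nu/\Gamma(\nu+1)$ for $\nu\ge-\tfrac12$ and real $z$, which via \eqref{FourierPn} and $\Gamma(m+\tfrac32)\ge m!\ge(m/e)^m$ gives $|c_m^k|\le\sqrt\pi\,\sqrt{m+1}\,(ek\pi/2m)^m$. For $m>n/2$ and $k\le n/M$ the base is $\le e\pi/M$, so taking also $M>e\pi$ makes this a convergent series with sum $\le M_2'e^{-a_2n}$. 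Adding the two ranges (and the $k=0$ case) gives $|\langle f,\psi_n\rangle|\le M'e^{-an}$ with $M'=M_1'+M_2'$, $a=\min(a_1,a_2)$, for the fixed $M$ above; $M$ can be taken $\ge\sqrt2$ — in fact any $M\gtrsim e\pi$ (e.g. $M=9$) works.

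The main obstacle is the low range. There the natural bound on $\beta_m^n$ carries the \emph{increasing} factor $(2/\sqrt q)^m$, so after summing one is left with $|\mu_n(c)|$ multiplied by $(2\sqrt{\chi_n}/c)^{n/2}$, which may grow like $n^{n/2}$; the mere exponential decay $\lambda_n(c)\le\delta e^{-an}$ of \eqref{best-bis} does \emph{not} suffice and one must use the sharper decay $\lambda_n(c)\lesssim\big(\tfrac{ec}{4n}\big)^{2n}$ of \cite{Bonami-Karoui2}. The delicate part is then essentially arithmetic: verifying that one value of $M$ simultaneously (i) keeps $q<1$ and validates \eqref{Decay2beta} and \eqref{bounds2-chi} on $m\le m_0$, (ii) makes $e^{\pi^2/(4M^2)}\cdot\tfrac{e^2\pi}{16M}<1$ so the low range decays, absorbing the $e^{\pi^2c^2/(4n)}$ factor in $A(n,c)$, and (iii) makes $e\pi/M<1$ so the high range decays — all with constants uniform in $c\ge1$.
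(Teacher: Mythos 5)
Your proof is correct and follows essentially the same route as the paper's: expand $\psi_n$ in normalized Legendre polynomials, split the resulting sum at a fixed multiple of $n$, control the low range by the coefficient bound \eqref{Decay2beta} together with the decay of $\mu_n(c)$, and control the high range by $|\beta_m^n|\le 1$ together with the Bessel bound \eqref{Eq2.2}. The only substantive difference is in the low range, where you replace the exponential estimate \eqref{best-bis} --- which, as you rightly observe, cannot absorb the super-exponential factor $\left(2\sqrt{\chi_n}/c\right)^{n/2}$, a point the paper's inequality \eqref{boundI1} glosses over --- by the sharper bound $\lambda_n(c)\le A(n,c)\left(\frac{ec}{2(2n+1)}\right)^{2n+1}$, at the price of a much larger admissible $M$ (of order $e\pi$ rather than $\sqrt 2$), which is harmless since the lemma only asserts the existence of some $M\ge\sqrt 2$.
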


\begin{proof} This scalar product can be written  by using \eqref{FourierPn}
\begin{eqnarray*}
< e^{ik\pi x},\psi_n >&=& \int_{-1}^1  e^{ik\pi x}\,\psi_n(x)\, dx = \sum_{m\geq 0} \beta_m^n < e^{ik\pi x},\overline P_m > =\sum_{m\geq 0} \beta_m^n \sqrt{\frac{2}{k}}\sqrt{m+1/2} J_{m+1/2} (k\pi)\\
&=&\sum_{m= 0}^{[n/M]} \beta_m^n \sqrt{\frac{2}{k}}\sqrt{m+1/2} J_{m+1/2} (k\pi)+\sum_{m\geq [n/M]+1} \beta_m^n \sqrt{\frac{2}{k}}\sqrt{m+1/2} J_{m+1/2} (k\pi)\\
&=&I_1^n+ I_2^n.
\end{eqnarray*}
To bound $I_1^n,$ we first remark   that  the Fourier transform of $\overline P_n\chi_{[-1, 1]}$ is bounded by $1$ and then
we use remark \ref{condition2prop3} to check that \eqref{Decay2beta} is satisfied whenever $n\geq c M$ with $M\geq 1.40.$
Hence, we have
\begin{eqnarray*}
|I_1^n|&\leq& \sum_{m= 0}^{[n/M]} |\beta_m^n| \leq  \sqrt{\frac{5}{4\pi}}|\mu_n(c)| \sum_{m= 0}^{[n/M]}
\left(\frac{2\sqrt{\chi_n}}{c}\right)^m\\
&\leq & K  \left(\frac{2\sqrt{\chi_n}}{c}\right)^{[n/M]+1} |\mu_n(c)|.
\end{eqnarray*}
Moreover, taking into account the decay of the $\mu_n(c)$ given by \eqref{best-bis} and using the upper bound
of $\chi_n,$ given by \eqref{bounds2-chi}, we conclude that
\begin{equation}
\label{boundI1}
|I_1^n| \leq K' \left(\frac{\pi(n+1)}{c}\right)^{\frac nM +1}e^{-\delta n}\leq K'' e^{-an}
\end{equation}
for some   sufficiently small positive real number $a$, as soon as $M\geq \sqrt{2}$.
To bound $I_2^n,$ it suffices to use the fact that $|\beta_k^n|\leq 1$ and the bound
 of the Bessel function given by \cite{Andrews},
 \begin{equation}\label{Eq2.2}
 |J_{\alpha}(x)|\leq \frac{|x|^{\alpha}}{2^{\alpha}
 \Gamma(\alpha+1)},\quad \forall\, \alpha > -1/2,\quad\forall\,
 x\in \mathbb{R}.
 \end{equation}
 One concludes that
 \begin{eqnarray*}
 |I_2^n|&\leq &\sum_{m\geq n/M}  \sqrt{2/k} \sqrt{m+1/2} |J_{m+1/2} (k\pi)|\leq \sum_{m\geq [n/M]+1} \sqrt{2/k}\sqrt{m+1/2}
 \frac{ (k\pi)^{m+1/2}}{2^{m+1/2} \Gamma(m+3/2)} \\
 &\leq & \sqrt{\pi}\sum_{m\geq [n/M]+1}\frac{ (k\pi)^m}{2^m\sqrt{m+1/2}\Gamma(m+1/2)}.
 \end{eqnarray*}
Moreover, since ${\displaystyle \Gamma(m+1/2)\geq m!/\sqrt{m+1}}$ and ${\displaystyle m! \geq (m/e)^m \sqrt{2\pi m}},$ each term is bounded by an exponential $e^{-an}$ and we find the required estimate for $|I_2^n|$.
\end{proof}

In \cite{Chen}, the authors have given a different $L^2(I)-$convergence rate of $S_N(f)$ to $f$
in terms of the decay of the expansion coefficients $a_k(f)=\int_{-1}^1 f(x)\psi_{k}(x)\, dx.$ More precisely,
it has been shown in \cite{Chen} that
$$|a_N(f)|\leq C \left( N^{-2/3 s}\| f\|_{H^s(I)}+\left(\sqrt{\frac{c^2}{\chi_N(c)}}\right)^{\delta N}\| f\|_{L^2(I)}\right),$$
where $C, \delta$ are independent of $f, N$ and $c.$

\begin{remark}
The previous theorem gives the rate of convergence of the truncated PSWFs series expansion of a function $f$ from  $H^s_{per}.$ This rate of convergence
will be generalized in the sequel to the usual $H^s(I)-$space. Note that this rate of convergence drastically improves the one given by \cite{Chen}.
Moreover, unlike the error bound given in \cite{Wang}, the decay of the error bound given by the previous  theorem is still valid even when $N$ is comparable to $c.$
 Nonetheless, in practice,  Theorem \ref{approx1} is useful in the sense that it provides us
with a criteria for the choice of the bandwidth $c>0,$ that depends on the magnitude of the Sobolev exponent $s>0.$ The smaller $s,$ the larger $c$ should be and vice versa.
\end{remark}

\begin{remark}
We also have a bound of the error for ordinary polynomials.
Indeed, if we consider the monomial $f(x):=x^j$, then
$$a_n(f)=\int_{-1}^1 y^j \psi_{n,c}(y)\, dy= (-i)^j c^{-j} \mu_n(c)\psi_{n,c}^{(j)}(0),\quad\mbox{with } i^2=-1.$$
For fixed $j,$ we can then use Corollary 1  to conclude that if $c\geq 1$ and $c^2/\chi_N
<1,$ then we have
\begin{equation}\label{majoration1}
\| f- S_N f\|_2^2 \leq M \sum_{k\geq N} \left(\frac{\chi_k(c)}{c^2}\right)^j|\mu_k(c)|^2\leq M' c^{-2N} \sum_{k\geq N} k^{2j} e^{-ak},
\end{equation}
which also leads to an exponential decay.
\end{remark}

As a corollary of the previous theorem and remark, we obtain the following corollary that extends the result of the
previous theorem to the case of the usual Sobolev space $H^s([-1,1]).$

\begin{corollary}
Let $c\geq 1,$  and let  $s>0.$ There exist constants $M\geq \sqrt{2}$ and $M', M'_s>0$ such that, when
$f\in H^s(I)$ and $N\geq \max\left( c M, 3\right)$,   we have the inequality
\begin{equation}\label{error2}
\|f- S_N (f)\|_{L^2(I)}\leq M'_s (1+ N^2)^{-s/2}  \| f\|_{H^s([-1,1])}+ M' e^{-aN}  \| f\|_{L^2([-1,1])}.
\end{equation}
\end{corollary}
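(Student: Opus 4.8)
The plan is to reduce the general Sobolev case to the periodic case of the preceding theorem by a splitting argument entirely analogous to the one used in the proof of Theorem~\ref{approx1}. First I would fix $f\in H^s(I)$ and invoke a bounded linear extension operator $E\colon H^s(I)\to H^s(\mathbb R)$, setting $F=E(f)$ so that $\|F\|_{L^2(\mathbb R)}\le K\|f\|_{L^2(I)}$ and $\|F\|_{H^s(\mathbb R)}\le K\|f\|_{H^s(I)}$. The point of passing through $\mathbb R$ is that one can then cut $F$ in frequency at a scale proportional to $N$: write $\widehat F=\widehat F\,\mathbf 1_{[-\pi N/M,\pi N/M]}+\widehat F\,(1-\mathbf 1_{[-\pi N/M,\pi N/M]})=\widehat{G}+\widehat{H}$. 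The high-frequency part $H$ satisfies $\|H\|_{L^2(\mathbb R)}\le (\pi N/M)^{-s}\|F\|_{H^s(\mathbb R)}\le K'(1+N^2)^{-s/2}\|f\|_{H^s(I)}$, which (after restriction to $I$ and using $\|H-S_NH\|_{L^2(I)}\le 2\|H\|_{L^2(I)}$) already has the form of the first term in \eqref{error2}. So it remains to handle the band-limited piece $G$, whose restriction to $I$ is a bandlimited function of bandwidth $\pi N/M$.

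Next I would treat $G|_I$. The natural route is to expand $G$ on $[-1,1]$ as a Fourier series: since $G$ is bandlimited with bandwidth $\pi N/M$, its Fourier transform is supported in an interval that, for the purpose of the restriction to $I$, lets us write $G(x)=\sum_{|k|\le N/M} b_k e^{ik\pi x}$ plus a controlled remainder — more carefully, one applies the periodic reduction: let $g$ be the $2$-periodic function whose Fourier coefficients agree with those of $G$ for $|k|\le N/M$ and vanish otherwise. Then $g\in H^s_{per}$ with $\|g\|_{H^s_{per}}\lesssim \|G\|_{H^s(\mathbb R)}\lesssim \|f\|_{H^s(I)}$ and $\|g\|_{L^2(I)}\lesssim \|f\|_{L^2(I)}$, and the difference $G|_I-g$ is again a high-frequency tail bounded by $C(1+N^2)^{-s/2}\|f\|_{H^s(I)}$ in $L^2(I)$. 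Now the preceding theorem (Theorem~2 in the excerpt, inequality \eqref{error1}) applies verbatim to $g$: with $N\ge\max(cM,3)$ one gets
\[
\|g-S_N g\|_{L^2(I)}\le M'(1+(\pi N)^2)^{-s/2}\|g\|_{H^s_{per}}+M'e^{-aN}\|g\|_{L^2(I)}\le M'_s(1+N^2)^{-s/2}\|f\|_{H^s(I)}+M'e^{-aN}\|f\|_{L^2(I)}.
\]
Assembling the three contributions $\|f-S_Nf\|_{L^2(I)}\le \|G|_I-g\|_{L^2(I)}+\|g-S_Ng\|_{L^2(I)}+\|H-S_NH\|_{L^2(I)}$ (with the bounded linear operator $S_N$ distributed across the decomposition $f|_I=G|_I+H|_I$) and collecting constants yields \eqref{error2}.

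The step I expect to require the most care is the passage from the bandlimited function $G$ on $\mathbb R$ to a genuinely $2$-periodic function $g\in H^s_{per}$ with the norm comparisons $\|g\|_{H^s_{per}}\lesssim\|G\|_{H^s(\mathbb R)}$ and the tail estimate $\|G|_I-g\|_{L^2(I)}\lesssim (1+N^2)^{-s/2}\|f\|_{H^s(I)}$: one must be sure that truncating the continuous Fourier transform of $G$ to a discrete set of frequencies $k\pi$, $|k|\le N/M$, and then periodizing, does not lose more than the claimed high-frequency amount, and that the Sobolev norms match up to a constant independent of $N$ and $c$. An alternative, perhaps cleaner, route avoiding periodization altogether is to expand $G$ directly against the exponentials and bound $\|G-S_NG\|_{L^2(I)}^2=\sum_{n>N}|\langle G,\psi_n\rangle|^2$ using the Lemma of the excerpt (the estimate $|\langle e^{ik\pi x},\psi_n\rangle|\le M'e^{-an}$ for $|k|\le n/M$), exactly as in the proof of Theorem~2; the Remark on monomials in the excerpt shows that such direct expansions also give exponential decay, so either way the bandlimited part contributes only the $e^{-aN}$ term. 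Everything else is routine bookkeeping of constants, which I would not write out in detail.
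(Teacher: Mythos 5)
Your reduction has a genuine gap at precisely the step you flag as delicate: the passage from the bandlimited piece $G$ to a $2$-periodic function $g\in H^s_{per}$ with $\|G|_I-g\|_{L^2(I)}\lesssim (1+N^2)^{-s/2}\|f\|_{H^s(I)}$. The quantity $\|G|_I-g\|_{L^2(I)}^2$ is the tail $\sum_{|k|>N/M}|b_k(G|_I)|^2$ of the Fourier series of the restriction of $G$ to $I$, and restriction to $[-1,1]$ does not commute with frequency truncation: even though $G$ is entire of exponential type, in general $G(1)\neq G(-1)$, so its Fourier coefficients on $I$ decay only like $1/k$ and the tail is only $O(N^{-1/2})$ in $L^2(I)$. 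Already for $f(x)=x$ (which lies in every $H^s(I)$) one has $b_k\sim (-1)^k/(k\pi)$, so your estimate cannot give a rate better than $N^{-1/2}$, whereas \eqref{error2} claims $N^{-s}$. Your ``alternative route'' runs into the same wall: expanding $G|_I$ in the exponentials $e^{ik\pi x}$ produces nonzero coefficients for all $k$, the key Lemma only covers $|k|\le n/M$, and $\sum_{|k|>n/M}|b_k|$ need not even converge when $b_k\sim 1/k$.

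The paper's proof avoids this by making the function genuinely periodic before invoking the theorem, in one of two ways: (i) subtract a polynomial $P$ of degree at most $[s]$ chosen so that $f+P\in H^s_{per}$ (this matches boundary values and derivatives and kills the $1/k$ tail), and handle $P$ separately through the exponential bound \eqref{majoration1} for monomials; or (ii) realize $f$ as the restriction to $I$ of a function of $H^s(\R)$ supported in $[-2,2]$, hence of a $4$-periodic function with no boundary jump, and note that the key Lemma remains valid for the exponentials $e^{ik\pi x/2}$. One of these devices is exactly what your argument is missing; without it the first term in \eqref{error2} cannot be obtained for $s>1/2$.
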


\begin{proof} We first assume that
$[s]= m,$ and $s\not\in \frac{1}{2}+\mathbf N,$ then there exists
a polynomial $P$, of degree  at most $m$,  such that $f+ P\in H^s_{per}.$
Consequently, by using the previous theorem and the inequality (\ref{majoration1}), one concludes
for (\ref{error2}). More generally, a function $f\in H^s(I)$ can be considered as  the restriction to $I$ of a function
of $H^s(\mathbb R)$ which may be taken to have support in $[-2,2].$ So it is also the restriction to $I$ of a periodic function of period 4.
Since Lemma 4 is also valid for the exponentials ${\displaystyle e^{i\frac{k\pi}{2} x},\, k\in \mathbb Z,}$ then we conclude as before.
\end{proof}

\section{Numerical results}

In this section, we illustrate the results of the previous  sections
by various numerical examples. For this purpose,
we first describe a numerical method for the
computation of the  PSWFs series
expansion coefficients of a function from the Sobolev space
$H^s(I).$ Note that if $f\in H^s_{per},\, s>0,$ then its different
PSWFs series expansion coefficients $(a_n(f))_n$ can be easily approximated as follows.
For a positive
integer $K,$  an approximation $a_n^K(f)$ to $a_n(f)$ is given by
the following formula
\begin{equation}\label{eq3.10}
a_n^K(f)= \frac{\mu_n(c)}{\sqrt{2}} \sum_{k=-K}^K b_k(f)
\psi_{n,c}\left(\frac{k\pi}{c}\right)= a_n(f) + \epsilon_K,
\end{equation}
where the $b_k(f)$ are the  Fourier coefficients of $f$ and
where ${\displaystyle \epsilon_K=\frac{1}{\sqrt{2}}\sum_{|k|\geq
K+1}\mu_n(c)b_k(f)
\psi_{n,c}\left(\frac{k\pi}{c}\right).}$ Moreover,  from the well
known asymptotic behavior of the $\psi_{n,c}(x),$ for large values
of $x,$ see for example \cite{Karoui1}, one can easily check  that
${\displaystyle \epsilon_K =
o\left(\frac{1}{((K+1)\pi)^{1+s}}\right).}$ This computational
method of the $a_n(f)$  has the advantage to work for small as
well as large values of the smoothness coefficient $s>0.$\\

Also, note that if $f\in H^s([-1,1]),$ where $s > 1/2+ 2m, m\geq 1,$ is an
integer,  then $f\in C^{2m}([-1,1]).$ Moreover since
$\psi_{n,c}\in C^{\infty}(\mathbb R),$ then the classical Gaussian
quadrature method, see for example \cite{Andrews} gives us the
following approximate value $\widetilde a_n(f)$ of the $(n+1)-$th
expansion coefficient $a_n(f)=<f,\psi_{n,c}>,$
\begin{equation}\label{eq3.8}
\widetilde a_n(f)=\sum_{l=1}^m \omega_l f(x_l)\psi_{n,c}(x_l)=
a_n(f)+\epsilon_n,
\end{equation}
with  ${\displaystyle |\epsilon_n| \leq \sup_{\eta\in
[-1,1]}\frac{1}{b_m^2}\frac{(f \cdot
\psi_{n,c})^{(2m)}(\eta)}{(2m)!}.}$ Here, $b_m$ is the highest
coefficient of $\overline{P_m},$ and the different weights
$\omega_l$ and nodes $x_l,$ are easily computed by the special
method given in \cite{Andrews}.

The following examples illustrate the quality of approximation in $H^s(I)$ by the PSWFs.\\

\noindent {\bf Example 1:} In this example, we show that the PSWFs
outperforms the Legendre polynomials in the approximation of a
class of functions from the Sobolev space $H^s([-1,1]),$ having
significant  large coefficients at some high frequency components.
 To fix the idea, let $\lambda>0,$ be a relatively large positive
real number and let $f_{\lambda}(x)=e^{i\lambda x}, x\in [-1,1].$
The Legendre series expansion coefficients of $f_{\lambda}$ are
given by
$$\alpha_{n}(0)= \int_{-1}^1 e^{i \lambda x}
\overline{P_{n}}(x)\, dx= i^n
\sqrt{\frac{2\pi}{\lambda}}\sqrt{n+1/2} J_{n+1/2}(\lambda).$$ In this case, we
have
\begin{equation}\label{eq5.0}
\|f_{\lambda}-\sum_{n=0}^N
\alpha_{n}(0)\overline{P_n}\|_2^2=\frac{2\pi}{\lambda}\sum_{n\geq
N+1} (n+1/2) (J_{n+1/2}(\lambda))^2. \end{equation}
 If $c>0$ is a positive
real number, then  the corresponding  PSWFs series expansion
coefficients of $f_{\lambda}$ are simply given as follows,
$$\alpha_{n}(c)=\int_{-1}^1 e^{i\lambda x}\psi_{n,c}(x)\, dx=
\mu_n(c) \psi_{n,c}(\lambda/c).$$
Note that the analytic extension of $\psi_{n,c}$ outside the interval $[-1,1]$
has been given in \cite{Slepian1} as follows
\begin{equation}\label{eq2.2.10}
\psi_{n}(x)= \frac{\sqrt{2\pi}}{|\mu_n(c)|}{\sum_{k\geq 0}} (-1)^k
\beta_k^n \sqrt{k+1/2}\frac{J_{k+1/2}(c
x)}{\sqrt{cx}},
\end{equation}
with
\begin{equation}\label{eq2.2.11}
 \mu_n(c)= i^n \sqrt{\frac{2\pi }{c}} \left[\frac{{\sum_{k\geq 0}} (-1)^k \sqrt{k+1/2}
\,\,\beta_k^n \,\, J_{k+1/2}(c)}{ {\sum_{k\geq 0}}
\beta_k^n\sqrt{k+1/2}}\right],
\end{equation}
is the exact value of the $n-$th eigenvalue of the finite Fourier
transform operator $\mathcal F_c.$

On the other hand, the
$L^2(I)-$approximation error by the PSWFs is given by
\begin{equation}\label{eeq5.1}
E_N(c)=\|f- \sum_{n=0}^N \alpha_n(c) \psi_{n,c}\|_2^2=\sum_{n\geq
N+1} |\mu_n(c)|^2
\left(\psi_{n,c}\left(\frac{\lambda}{c}\right)\right)^2.
\end{equation}
In the special case where $c=\lambda,$ the previous error bound
becomes ${\displaystyle E_N(\lambda)=\sum_{n\geq N+1} |\mu_n(c)|^2
\left(\psi_{n,c}(1)\right)^2.}$ Since from \cite{Bonami-Karoui1}, we have $|\psi_{n,c}(1)|\leq 2 \chi_n^{1/4},$ then by using
\eqref{bounds2-chi}, one gets $|\psi_{n,c}(1)|\leq \sqrt{2\pi (n+1)}.$
Moreover, since  the super-exponential
decay of the sequence  $(|\mu_n(c)|^2)_{n\geq 0}$ starts  around
$N_c=[ec/4],$  then from (\ref{eq5.0}) and (\ref{eeq5.1}), one
concludes that  the PSWFs are better adapted for the approximation
of the $f_{\lambda}$ by its $N-$th order truncated PSWFs series
expansion with $c=\lambda$ and $N=[\lambda].$ More generally, if
$0\leq c < \lambda,$ then $\frac{\lambda}{c} >1$ and the well known blow-up of
the $\psi_{n,c}\left(\frac{\lambda}{c}\right)$ with $\frac{\lambda}{c} >1,$ implies that
$\alpha_n(c) = \mu_n(c) \psi_{n,c}(\lambda/c)$ has a lower decay
than $\alpha_n(\lambda) = \mu_n(\lambda) \psi_{n,c}(1).$ Moreover,
if $c> \lambda,$ then the decay of the $|\mu_n(c)|^2$ and
consequently, the fast decay of the $\alpha_n(c)$ is possible only
if $n$ lies beyond a neighbourhood of $\frac{e c}{4} >
\frac{e\lambda}{4}.$ This means that $c=\lambda$ is the
appropriate value of the bandwidth to be used to approximate the
function $f_{\lambda}(x)= e^{i\lambda x}$ by its first $N-$th
truncated PSWFs series expansion, with $N=[\lambda].$ This
explains the numerical results given in \cite{Wang} concerning the
approximation of the test function $u(x)=\sin(20\pi x),$ where the
author has checked numerically that $c=20\pi$ is the appropriate
value of the bandwidth for approximating $u(x)$ by the PSWFs
$\psi_{n,c}$ with a given high precision and minimal number of the
truncation order $N.$ As another  example, we consider the value
of  $\lambda=50,$ then we find that
$$\| f_{\lambda}-\sum_{n=0}^{50}\alpha_n(0) \overline{P_n}\|_2\approx
3.087858E-01,\quad \|f_{\lambda}-\sum_{n=0}^{50}\alpha_n(50)
\psi_{n,50}\|_2\approx 1.356604 E-08.$$

\noindent {\bf Example 2:} In this example, we consider   the
Weierstrass function
\begin{equation}\label{eq5.1}
W_s(x)= \sum_{k\geq 0} \frac{\cos(2^{k}x)}{2^{ks}},\quad -1\leq
x\leq 1.
\end{equation}
The choice of this function allows us to see how the approximation process works on functions that are either nowhere
smooth or having small Sobolev smoothness exponent.
Note that $W_s \in H^{s-\epsilon}([-1,1]),\,\forall \epsilon <
s,\, s
>0.$ We have considered the value of $c=100,$ and computed
$W_{s,N},$ the $N-$th terms truncated PSWFs series expansion of
$W_{s}$ with different values of ${\displaystyle \frac{3}{4}\leq s
\leq 2}$ and different values of $20\leq N \leq 100.$  Also, for
each pair $(s,N),$  we have computed the corresponding approximate
$L^2-$ error bound ${\displaystyle
E_N(s)=\left[\frac{1}{50}\sum_{k=-50}^{50}
(W_{s,N}(k/50)-W_{s}(k/50))^2\right]^{1/2}}.$  Table 1 lists the
obtained values of $E_N(s).$ Note that the numerical results given
by Table 1, follow what has been predicted by the theoretical
results of the previous section. In fact, the $L^2-$errors
$\|W_s-\Pi_N W_s\|_2$ is of order $O(N^{-s}),$ whenever
${\displaystyle N \geq N_c \sim \left[\frac{ 2c}{\pi}\right]+4.}$  The graphs of $W_{3/4}(x)$ and $W_{3/4,
N}(x),\, N=90$ are given by Figure 1.
\begin{center}
\begin{tiny}
{\tiny
\begin{table}[]
\caption{Values of $E_N(s)$ for various values of $N$ and $s.$}
\vskip 0.2cm
\begin{tabular}{ccccccc} \hline
    & $s=0.75$ &              $s= 1$    &     $s= 1.25$ &   $ s=1.5$ &   $ s=1.75$ & $s= 2.0$ \\ \hline
$N$ &  $E_N(s)$ & $E_N(s)$ &$E_N(s)$ &$E_N(s)$ &$E_N(s)$ &$E_N(s)$
\\ \hline
20& 4.57329E-01& 4.66173E-01 & 4.85990E-01  &    5.05973E-01  &    5.23232E-01  &  5.37227E-01 \\
30& 3.15869E-01& 3.11677E-01 & 3.28241E-01  &    3.48562E-01  &    3.67260E-01  &  3.82963E-01 \\
40& 1.06843E-01& 1.52009E-01 & 1.91237E-01  &    2.20969E-01  &    2.43432E-01  &  2.60523E-01 \\
50& 4.09844E-02& 6.88472E-02 & 1.01827E-01  &    1.26518E-01  &    1.44809E-01  &  1.58520E-01 \\
60& 3.30178E-02& 2.09084E-02 & 3.25551E-02  &    4.28999E-02  &    5.06959E-02  &  5.65531E-02  \\
70& 3.15097E-02& 8.82446E-03 & 2.51157E-03  &    7.35725E-04  &    2.33066E-04  &  1.04137E-04 \\
80& 3.01566E-02& 8.55598E-03 & 2.40312E-03  &    6.87458E-04  &    1.98993E-04  &  5.80481E-05  \\
90& 2.67972E-02& 7.64167E-03 & 2.14661E-03  &    6.15062E-04  &    1.78461E-04  &  5.22848E-05 \\
100&2.39141E-02& 6.72825E-03 & 1.82818E-03  &    5.10057E-04  &    1.45036E-04  &  4.19238E-05  \\

 \hline
\end{tabular}\end{table}
}
\end{tiny}
\end{center}

\begin{figure}[h]\hspace*{0.5cm}
{\includegraphics[width=14.5cm,height=5.5cm]{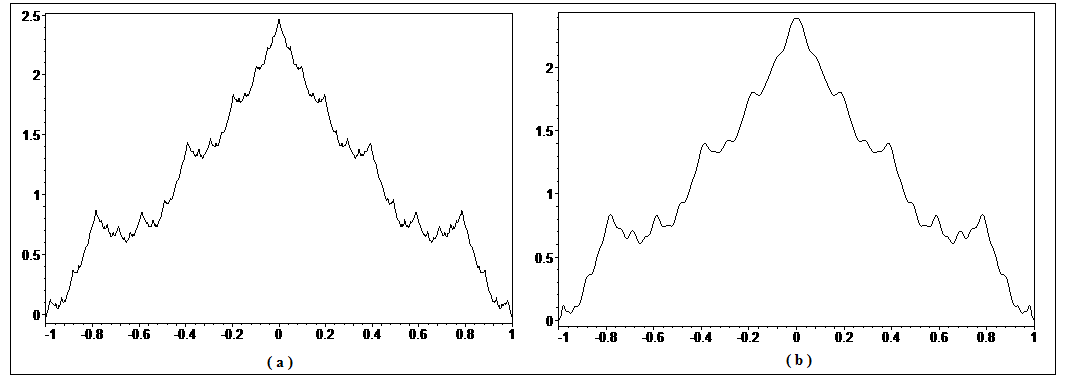}} \vskip
-0.5cm\hspace*{2cm}\caption{(a) graph of $W_{3/4}(x),\quad$ (b)
graph of $W_{3/4,N}(x), N=90.$ }
\end{figure}

\noindent {\bf Example 3:}  In this example, we let $s>0$ be any
positive real number and we consider the random  function
$B_s(x)$ is given as follows.
\begin{equation*}
B_s(x)= \sum_{k\geq 1} \frac{X_k}{k^s} \cos(k\pi x),\quad -1\leq
x\leq 1.
\end{equation*}
Here, $X_k$ is a sequence of independent standard Gaussian  random variables.  The random process $B_s$ behaves like a fractional Brownian motion, with Hurst parameter $H=s-1/2.$ It is almost surely in $H^{s'}$ for $s'<s-1/2.$
For the special case $s=1,$  we consider
the band-width $c=100,$ a truncation order $N=80$  and compute
$B_{1,N}$ the  approximation of $B_1$ by its $N-$th terms
truncated PSWFs series expansion. The graphs of $B_1$ and
$B_{1,N}$ are given by Figure 2.

\begin{figure}[h]\hspace*{0.5cm}
{\includegraphics[width=14.5cm,height=6.2cm]{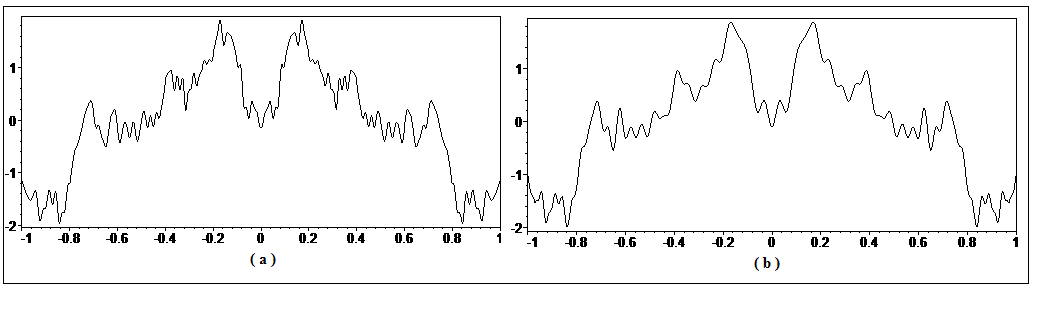}} \vskip
-0.5cm\hspace*{2cm}\caption{(a) graph of $B_1(x),\quad$ (b) graph
of $B_{1,N}(x), N=80.$ }
\end{figure}


\begin{remark}
From the quality of approximation in the Sobolev spaces
$H^s([-1,1])$ given in this paper and in \cite{Boyd1, Chen, Wang},
one concludes that for any value of the bandwidth $c\geq 0,$  the
approximation error $\|f-S_N f\|_2$ has the asymptotic order
$O(N^{-s}).$ Nonetheless, for a given $f\in H^s([-1,1]), s>0$
which we may assume to have a unit $L^2-$norm and for a given
error tolerance $\epsilon,$ the appropriate value of the bandwidth
$c\geq 0,$ corresponding to the minimum truncation order $N,$
ensuring that $\|f-S_N f\|_2\leq \epsilon,$ depends on whether or
not, $f$ has some significant Fourier expansion coefficients,
corresponding to large frequency components. In other words, the
faster decay to zero  of the Fourier coefficients of $f,$ the
smaller the value of the bandwidth should be and vice versa.
\end{remark}

\noindent
{\bf Acknowledgement:} The authors thank very much the anonymous referee for the valuable comments and suggestions
that helped them to improve the revised version of this work. Special thanks of the second author  go to Laboratory MAPMO
of the University of Orl\'eans
where part of this work has been done while he  was a visitor there.

\end{document}